\newtheorem{example}{Example}
\numberwithin{example}{section}
\newtheorem{remark}{Remark}
\numberwithin{remark}{section}
\newcommand*{\Scale}[2][4]{\scalebox{#1}{$#2$}}%
\newcommand{\reals}{\mathbb{R}}
\newcommand{\half}{\Scale[0.5]{\tfrac{1}{2}}}
\newcommand{\nhalf}{\Scale[0.5]{-\tfrac{1}{2}}}
\newcommand{\set}[1]{\{#1\}}
\newcommand{\seq}[1]{\{#1\}}
\newcommand{\abs}[1]{\lvert#1\rvert}
\newcommand{\semi}[1]{\lvert#1\rvert}
\newcommand{\norm}[1]{\lVert#1\rVert}
\renewcommand{\brack}[1]{\langle#1\rangle}
\newcommand{\inner}[2]{\ensuremath{\left(#1, #2\right)}}
\newcommand{\HnormO}[1]{\lvert#1\rvert_{1, \Omega}}
\newcommand{\HnormG}[1]{\lvert#1\rvert_{1, \Gamma}}
\newcommand{\HG}{H^{1}_0\left(\Gamma\right)}
\newcommand{\open}[1]{\left(#1\right)}
\newcommand{\close}[1]{\left[#1\right]}
\newcommand{\Amat}{\ensuremath{\mathsf{A}}}
\newcommand{\Bmat}{\ensuremath{\mathsf{B}}}
\newcommand{\Mmat}{\ensuremath{\mathsf{M}}}
\newcommand{\Tmat}{\ensuremath{\mathsf{T}}}
\newcommand{\Hmat}[1]{\ensuremath{\mathsf{H}\!\left(#1\right)}}
\newcommand{\mat}[1]{\ensuremath{\mathsf{#1}}}
\newcommand{\inv}[1]{\ensuremath{{#1}^{-1}}}
\newcommand{\ninv}[1]{\ensuremath{{#1}^{\Scale[0.5]{-1}}}}
\newcommand{\transp}[1]{\ensuremath{{#1}^{\Scale[0.5]{\top}}}}
\renewcommand{\vec}[1]{\mat{#1}}
\newcommand{\spn}{\operatorname{span}\ }
\newcommand{\op}[1]{\ensuremath{\mathcal{#1}}}
\newcommand{\AU}{\ensuremath{A_U}}  % What was once A_1
\newcommand{\AV}{\ensuremath{A_V}}  %               A_2
\newcommand{\BU}{\ensuremath{B_U}}  %               B_1
\newcommand{\BV}{\ensuremath{B_V}}  %               B_2
\newcommand{\BBW}{\ensuremath{\mathcal{B}_{W}}}  %  W cap preconditioner
\newcommand{\BBQ}{\ensuremath{\mathcal{B}_{Q}}}  %  Q cap preconditioner
\newcommand{\AAh}{\ensuremath{\mathbb{A}}}
\newcommand{\BBh}{\ensuremath{\mathbb{B}}}
\newcommand{\BBWh}{\ensuremath{\mathbb{B}_{W}}}
\newcommand{\BBQh}{\ensuremath{\mathbb{B}_{Q}}}
\renewcommand{\inf}{\mathop{\@inf\vphantom{\@sup}}}
\renewcommand{\sup}{\mathop{\@sup\vphantom{\@inf}}}
\newcommand{\@inf}{\operatorname*{inf}}
\newcommand{\@sup}{\operatorname*{sup}}
\newcommand{\overbar}[1]{\mkern 1.5mu\overline{\mkern-1.5mu#1\mkern-1.5mu}\mkern 1.5mu}
\newcommand{\getvar}[2]{%
	\CatchFileEdef#1{"|kpsewhich -var-value #2"}{\endlinechar=-1}%
}
\getvar{\usrtest}{USER}
\title{Efficient preconditioners for saddle point systems with trace
constraints coupling 2D and 1D domains
\thanks{The work of Magne Nordaas, Mikael Mortensen and Kent-Andre Mardal has
been supported by a Center of Excellence grant from the Research Council of
Norway to the Center for Biomedical Computing at Simula Research Laboratory.}
} 
\author{
Miroslav Kuchta \footnotemark[2] \and
Magne Nordaas \footnotemark[3]\and
Joris C.~G. Verschaeve \footnotemark[2]\and
Mikael Mortensen \footnotemark[2] \footnotemark[3]\and
Kent-Andre Mardal \footnotemark[2] \footnotemark[3]
}
\begin{document}
% For affils . and addresses
\renewcommand{\thefootnote}{\fnsymbol{footnote}}
\footnotetext[2]{Department of Mathematics, Division of Mechanics, University of Oslo\\
\texttt{\{mirok, joris, mikaem\}@math.uio.no}}
\footnotetext[3]{Center for Biomedical Computing, Simula Research Laboratory, \\
\texttt{\{magneano, kent-and\}@simula.no}
}
\renewcommand{\thefootnote}{\arabic{footnote}}

\maketitle
\slugger{SISC}{xxxx}{xx}{x}{x--x}

\begin{abstract}
We study preconditioners for a model problem describing the coupling of two 
elliptic subproblems posed over domains with different topological dimension by 
a parameter dependent constraint. A pair of parameter robust and efficient preconditioners 
is proposed and analyzed. Robustness and efficiency of the preconditioners is 
demonstrated by numerical experiments. 
% While the considered problem is rather basic, 
% we believe that the ideas used in the construction of the preconditioners are largely 
% relevant to numerous real-life applications, e.g. biomedical and 
% fluid-structure interaction problems.
\end{abstract}

\begin{keywords}preconditioning, saddle-point problem, Lagrange multipliers\end{keywords}

\begin{AMS}65F08\end{AMS}

\pagestyle{myheadings}
\thispagestyle{plain}
\markboth{Preconditioning for trace constrained systems}{}

% Finally: Intro and preliminiaries
\section{Introduction}\label{sec:intro} 
This paper is concerned with preconditioning of multiphysics problems
where two subproblems of different dimensionality are coupled. 
We assume that $\Gamma$ is a sub-manifold contained within $\Omega \in \mathbb{R}^n$
and consider the following problem:  

\begin{subequations}\label{pde}
\begin{align}
\label{pde1}
  -\Delta u + \epsilon\delta_{\Gamma} p&= f &\mbox{ in } \Omega,\\
\label{pde2}
-\Delta v - p &= g &\mbox{ on } \Gamma,\\
\label{coupling}
\epsilon u - v  &=  0 &\mbox{ on } \Gamma,
\end{align}
\end{subequations}
where $\delta_{\Gamma}$ is a function with properties similar 
to the Dirac delta function as will be discussed later. To allow for a unique
solution $(u, v, p)$ the system must be equipped with suitable boundary conditions 
and we shall here, for simplicity, consider homogeneous Dirichlet boundary conditions
for $u$ and $v$ on $\partial\Omega$ and $\partial\Gamma$ respectively. 
% 
% The following is attempt to answer Mikael's comment
%+\MMO{We need to define u, v, p somehow here. At least state that they are the 
%+sought after solutions.  Give examples of what they may represent later.}
We note that the unknowns $u, v$ are here the primary variables, while the
unknown $p$ should be interpreted as a Lagrange multiplier associated with the constraint
\eqref{coupling}.

The two elliptic equations that are stated on two different domains, $\Omega$ and $\Gamma$, are
coupled and therefore the restriction of $u$ to $\Gamma$ and the extension of $p$
to $\Omega$ are crucial. When the codimension of $\Gamma$ is one, the restriction 
operator is a trace operator and the extension operator is similar to the Dirac 
delta function. We note that $\epsilon \in (0,1)$ and that  the typical
scenario will be that $\epsilon \ll 1$.  We will therefore focus on methods
that  are robust in $\epsilon$. 

The problem \eqref{pde1}--\eqref{coupling} is relevant to biomedical applications 
\cite{mox, Cattaneo, coupled_cstr_via_lm, dangelo_3d_1d} where it models the 
coupling of the porous media flow inside tissue to the vascular 
bed through Starlings law. Further, problems involving coupling of the finite 
element method and the boundary element method, e.g. \cite{funken_fem_bem,
multiscale_fem_bem}, are of the form \eqref{pde}. The system is also relevant
for domain decomposition methods based on Lagrange multipliers
\cite{domain_decomposition}. Finally, in solid mechanics, the problem of plates
reinforced with ribs, cf. for example \cite[ch. 9.11]{timoshenko}, can be recast
into a related fourth order problem. 
% FSI paper
We also note that the techniques developed here to address the constraint 
\eqref{coupling} are applicable in preconditioning fluid-structure interaction 
problems involving interactions with thin structures, e.g. filaments \cite{etienne}.

One way of deriving equations \eqref{pde} is to consider the following  
minimization problem 
\begin{equation}
\label{eq:energy}
\left. 
\begin{array}{l}
\displaystyle\int_{\Omega} (\nabla u)^2 - 2 u f \,\mathrm{d}x \\
\displaystyle\int_{\Gamma} (\nabla v)^2 - 2 v g \,\mathrm{d}s 
\end{array}
\right\} \rightarrow \mbox{min}
\end{equation}
subject to the constraint
\begin{subequations}\label{eq:min}
\begin{align}
\label{eq:constraint}
&\epsilon u - v  =  0\quad\mbox{ on }\Gamma.
\end{align}
\end{subequations}
Using the method of Lagrange multipliers, the constrained minimization problem will 
be re-cast as a saddle-point problem. The saddle-point problem
is then analyzed in terms of the Brezzi conditions~\cite{brezzi1974existence}  
and efficient solution algorithms are obtained using operator 
preconditioning~\cite{kent_ragnar}.  
A main challenge is the fact that 
the constraint \eqref{eq:constraint} necessitates the use of trace operators which 
leads to operators in fractional Sobolev spaces on $\Gamma$. 

% We remark that the system \eqref{pde} is closely related to the overdetermined problem
% \[
% \begin{aligned}
% -\Delta u &= f &\mbox{ in }\Omega,\\
% -\Delta v &= g &\mbox{ on }\Gamma,\\
% \epsilon u - v &= 0 &\mbox{ on }\Gamma,
% \end{aligned}
% \]
% equipped with homogeneous Dirichlet boundary conditions. The unique solution of
% the overdetermined problem exists only if certain compatibility conditions
% are satisfied by the data $f$ and $g$. As a matter of fact, there exists a unique
% function $p$ defined over $\Gamma$ that can be used to make the right-hand side
% compatible. If $u, v$ are the solutions with compatible data, the triplet
% $(u, v, p)$ is the unique solution of $\eqref{pde}$.

An outline of the paper is as follows: Section \ref{sec:prelim} presents the necessary 
notation and mathematical framework needed for the analysis. Then the mathemathical 
analysis as well as the numerical experiments of two different preconditioners are 
presented in \S \ref{sec:Qcap} and \S \ref{sec:Vcap}, respectively.
Section
\ref{sec:cpu_cost} discusses computational efficiency of both methods.
% Using \$ if section is not the first word of the sentence is SIAM guidline

\section{Preliminaries}\label{sec:prelim}
Let $X$ be a Hilbert space of functions defined on a domain $D$  and let $\|\cdot \|_X$ denote its norm.
The $L^2$ inner product on a domain $D$ is denoted $(\cdot, \cdot)_D$ or $\int_D \cdot$ , while
$\langle\cdot, \cdot\rangle_D$ denotes the corresponding duality pairing between a Hilbert space $X$ 
and its dual space $X^*$. We will use $H^m= H^m(D)$ to denote the Sobolev space of
functions on $D$ with $m$ derivatives in $L^2 = L^2(D)$. 
The corresponding norm is denoted $\|\cdot\|_{m,D}$. In general, we will use 
$H^m_0$ to denote the closure in $H^m$ of the space of smooth functions with compact
support in $D$ and seminorm is denoted as $|\cdot|_{m, D}$. 
%The subscipt $D$ is
%not used in the notation whenever the domain is clear from the context.

The space of bounded linear operators mapping elements of $X$ to $Y$
is denoted  $\mathcal{L}(X,Y)$ and if $Y= X$ we simply write $\mathcal{L}(X)$ 
instead of $\mathcal{L}(X,X)$. If $X$ and $Y$ are Hilbert spaces, both 
continuously contained in some larger Hilbert space, then the intersection 
$X \cap Y$ and the sum $X+ Y$ are both Hilbert spaces with norms given by
\[
\|x \|_{X \cap Y}^2 = \|x \|_{X}^2 + \|x \|_{Y}^2 \quad \text{and }
\|z \|_{X+Y}^2 = \inf_{\substack{x\in X, y \in Y\\z = x+y}}
(\|x \|_{X}^2 + \|y \|_{Y}^2 ).
\]

In the following $\Omega\subset\reals^n$ is an open connected domain
with Lipschitz boundary $\partial\Omega$.  The trace operator $T$ is
defined by $T u = u|_\Gamma$ for $u\in C(\overline\Omega)$ and
$\Gamma$ a Lipschitz submanifold of codimension one in $\Omega$. The
trace operator extends to bounded and surjective linear operator
$T:H^1(\Omega)\rightarrow H^{\half}(\Gamma)$, see e.g. \cite[ch. 7]{AdamsFournier}. 
The fractional Sobolev space $H^{\half}(\Gamma)$ can be equipped with the norm
\begin{equation}\label{eq:H_half_norm}
  \norm{u}^2_{H^{\half}(\Gamma)} =  \norm{u}^2_{L^2(\Gamma)} +  
  \int_{\Gamma \times \Gamma} \frac{\abs{u(x) -
      u(y)}^2}{\abs{x-y}^{n+1}}\,\mathrm{d}x\mathrm{d}y.
\end{equation}

However, the trace is not surjective as an operator from
$H^1_0(\Omega)$ into $H^{\half}(\Gamma)$, in particular the constant
function $1\in H^{\half}(\Gamma)$ is not in the image of the trace
operator. Note that $H^{\half}_0(\Gamma)$ does not characterize the
trace space, since $H^{\half}_0(\Gamma) = H^{\half}(\Gamma)$, see
\cite[ch. 2, thm. 11.1]{LionsMagenes}. Instead, the trace space
can be identified as $H^{\half}_{00}(\Gamma)$, defined as the subspace of $H^{\half}(\Gamma)$ for
which extension by zero into $H^{\half}(\tilde \Gamma)$ is continuous,
for some suitable extension domain $\tilde \Gamma$ extending $\Gamma$ (e.g. $\tilde
\Gamma = \Gamma \cup \partial\Omega$). To be precise, the space $H^{\half}_{00}(\Gamma)$ 
can be characterized with the norm
\begin{equation}\label{eq:H_half_00_norm}
  \norm{u}_{H_{00}^{\half}(\Gamma)} = \norm{\tilde u}_{H^{\half}(\tilde \Gamma)},
  \quad \tilde u(x) =
  \begin{cases}
    u(x) & x\in \Gamma \\
    0 & x\notin \Gamma.
  \end{cases}
\end{equation}
The space $H_{00}^{\half}(\Gamma)$ does not depend on the extension
domain $\tilde \Gamma$, since the norms induced by different choices of $\tilde \Gamma$
will be equivalent.

The above norms \eqref{eq:H_half_norm}--\eqref{eq:H_half_00_norm} for
the fractional spaces are impractical from an implementation point of
view, and we will therefore consider the alternative construction
following \cite[ch. 2.1]{LionsMagenes} and \cite{CW-H-M}. For $u,
v\in H_0^1(\Gamma)$, set $L_u(v) = (u, v)_{\Gamma}$.  Then $L_u$ is a
bounded linear functional on $H_0^1(\Gamma)$ and in accordance with
the Riesz-Fr\'echet theorem there is an operator $S \in
\mathcal{L}\big(H^1_0(\Gamma)\big)$ such that
\begin{equation}\label{eq:scale_S}
  (Su, w)_{H^1_0(\Omega)} 
  = L_u(w) = 
  %= \open{\nabla S u, \nabla w}_{\Gamma} = 
  \open{u, w}_{\Gamma}, \qquad u,w\in H^1_0(\Gamma).
\end{equation}
The operator $S$ is self-adjoint, positive definite, injective and
compact.  Therefore the spectrum of $S$ consists of a nonincreasing sequence of positive
eigenvalues $\set{\lambda_k}_{k=1}^{\infty}$ such that $0 <
\lambda_{k+1} \leq \lambda_{k}$ and $\lambda_{k}\rightarrow 0$, see
e.g. \cite[ch. X.5, thm. 2]{Yosida}. The eigenvectors $\set{\phi_k}_{k=1}^\infty$
of $S$ satisfy the generalized eigenvalue problem
\begin{equation*}
  A\phi_k = \lambda^{-1}_k M \phi_k
\end{equation*}
where operators $A, M$ are such that $\brack{A u, v}_{\Gamma} =
\open{\nabla u, \nabla v}_{\Gamma}$ and $\brack{M u, v}_{\Gamma} =
\open{u, v}_{\Gamma}$. The set of eigenvectors
$\set{\phi_k}_{k=1}^{\infty}$ forms a basis of $H^1_0\open{\Gamma}$ 
orthogonal with respect the inner product of $H^1_0(\Gamma)$ and orthonormal 
with respect to the inner product on
$L^2(\Gamma)$. Then for $u = \sum_k c_k \phi_k \in \spn\set{\phi_k}_{k=1}^\infty$
%\begin{equation*}
%  \norm{u}_{0, \Gamma} = \sqrt{\sum_k c^2_k} < \infty,
%  \qquad
%  \semi{u}_{1, \Gamma} = \sqrt{\sum_k c^2_k \lambda^{-1}_k} < \infty
%\end{equation*}
and $s\in [-1,1]$, we set
\begin{equation}\label{eq:scale_norm}
  \norm{u}_{H_s} 
  = 
  \sqrt{\sum_k c^2_k \lambda^{-s}_k}
\end{equation}
and define $H_s$ to be the closure of $\spn\set{\phi_k}_{k=1}^\infty$
in the above norm.  Then $H_0 = L^2(\Gamma)$ and $H_1 = H^1_0(\Gamma)$, with
equality of norms. Moreover, we have $H_{\half}=
H^{\half}_{00}(\Gamma)$ with equivalence of norms. This essentially
follows from the fact that $H_{\half}$ and $H^{\half}_{00}(\Gamma)$
are closely related interpolation spaces, see \cite[thm. 3.4]{CW-H-M}. Note that we also  have
$H_{-1}=(H^1_0(\Gamma))^*=H^{-1}(\Gamma)$ and
$H_{-\half}=(H^{\half}_{00}(\Gamma))^*=H^{-\half}(\Gamma)$.

As the preceeding paragraph suggests we shall use normal font to 
denote linear operators, e.g. $A$. To signify that the particular operator acts 
on a vector space with multiple components we employ calligraphic font, e.g. $\mathcal{A}$.  
Vectors and matrices are denoted by the sans serif font, e.g.,  $\mat{A}$ and
$\vec{x}$. In case the matrix has a block structure it is typeset with the 
blackboard bold font, e.g. $\mathbb{A}$. Matrices and vectors are related to the 
discrete problems as follows, see also
\cite[ch. 6]{kent_ragnar}. Let $V_h\subset H^1_0(D)$ and let the 
discrete operator $A_h:V_h\rightarrow V^{*}_h$ be defined in terms of the Galerkin method:
\[
\brack{A_h u_h, v_h}_D = \brack{A u, v_h}_D, \mbox{ for } u_h, v_h \in V_h 
\mbox{ and } u \in H^1_0(D)  .
\]
Let $\psi_j, j\in\close{1, m}$ the basis functions of $V_h$. The matrix equation,
\[
\Amat \vec{u} = \vec{f}, \quad 
\vec{u} \in \mathbb{R}^m  \mbox{ and } \vec{f} \in \mathbb{R}^m
\]
is obtained as follows:  
Let $\pi_h:V_h\rightarrow\reals^m$ and $\mu_h:V^{*}_h\rightarrow \reals^m$ be 
given by
\[
v_h=\sum_j\open{\pi_h v_h}_j\psi_j,\quad v_h\in V_h
\quad\quad\text{and}\quad\quad
\open{\mu_h f_h}_j = \brack{f_h, \psi_j}_D, \quad f_h\in V^{*}_h.
\]
Then 
\[
\Amat = \mu_h A_h \pi_h^{-1}, \quad \vec{v}=\pi_h v_h, \quad \vec{f} = \mu_h f_h.
\]

A discrete equivalent to the $H_s$ inner product \eqref{eq:scale_norm} is 
constructed in the following manner, similar to the continuous case.
There exists a complete set of eigenvectors 
$\vec{u}_i\in\reals^m$ with the property $\transp{\vec{u}_j} \Mmat \vec{u}_i=\delta_{ij}$ 
and $m$ positive definite (not necessarily distinct) eigenvalues $\lambda_i$ of 
the generalized eigenvalue problem $\Amat \vec{u}_i=\lambda_i\Mmat \vec{u}_i$. 
Equivalently the matrix $\Amat$ can be decomposed as 
$\Amat=\left(\Mmat\mat{U}\right)\mat{\Lambda} \transp{\left(\Mmat\mat{U}\right)}$ 
with $\mat{\Lambda}=\text{diag}\left(\lambda_1, \cdots, \lambda_m\right)$ and 
$\text{col}_i \mat{U}=\vec{u}_i$ so that $\transp{\mat{U}}\Mmat\mat{U}=\mat{I}$
and $\transp{\mat{U}}\Amat\mat{U}=\Lambda$.
We remark that $\Amat$ is the stiffness matrix,  while $\Mmat$ is the mass matrix.  
 
Let now $\mat{H}:\reals \rightarrow \mat{P}_{\text{sym}}$, where $\mat{P}_{\text{sym}}$
denotes the space of symmetric positive definite matrices, be defined as
\begin{equation}\label{eq:H_def}
\Hmat{s} = \left(\Mmat\mat{U}\right)
\Lambda^s
\transp{\left(\Mmat\mat{U}\right)}.
\end{equation}
Note that due to $\mat{M}$ orthonormality of the eigenvectors the inverse of
$\Hmat{s}$ is given as $\inv{\Hmat{s}}=\mat{U}\mat{\Lambda}^{-s}\transp{\mat{U}}$. 
To motivate the definition of the mapping, we shall in the following example
consider several values $\Hmat{s}$ and show the relation of the matrices to
different Sobolev (semi) norms of functions in $V_h$.% \\

\begin{example}[$L_2$, $H^1_0$ and $H^{-1}$ norms in terms of matrices]\label{ex:norms}% \\
Let $V_h\subset H^1_0\left(\Gamma\right)$, $\dim V_h=m$,  $v_h\in V_h$ and 
$\vec{v}\in\reals^m$ the representation of $v_h$ in the basis of $V_h$, i.e. $\vec{v} = \pi_h v_h$. The $L^2$
norm of $v_h$ is given through the mass matrix 
$\mat{M}$ as $\|v_h\|_{0, \Gamma}^2=\transp{\vec{v}}\Mmat\vec{v}$ 
and $\Mmat=\Hmat{0}$. Similarly for the $H^1_0$ (semi) norm it holds
that $\semi{v_h}^2_{1, \Gamma}=\transp{\vec{v}}\Amat\vec{v}$, where $\Amat$ is 
the stiffness matrix, and  $\Amat=\Hmat{1}$. 
%\MK{Kent, this was writen before as $\semi{v_h}^2_1=\transp{\vec{v}}\Amat\vec{v}
%$. Why the change?}
Finally a less trivial example, let $f_h \in V_h$ and consider $f_h$ as a
bounded linear functional, $\brack{f_h, v_h}_{\Gamma}=\inner{f_h}{v_h}_{\Gamma}$ 
for $v_h\in V_h$. Then $\norm{f_h}^2_{-1, \Gamma}=\transp{\vec{f}}\Hmat{-1}\vec{f}$. 
%\MK{Also, here there was $f_h\in V_h^{*}$ and $\norm{f_h}_{-1}=\semi{u_h}_1$}
By Riesz representation theorem there exists a unique $u_h \in V_h$ such that
$\inner{\nabla u_h}{\nabla v_h}_{\Gamma} = \brack{f_h, v_h}_{\Gamma}$ for all 
$v_h\in V_h$ and $\|f_h\|_{-1, \Gamma}=\semi{u_h}_{1, \Gamma}$. The latter equality 
yields $\|f_h\|^2_{-1, \Gamma}=\transp{\vec{u}}\Amat\vec{u}$ but since $u_h\in V_h$ is 
given by the Riesz map, the coordinate vector comes as a unique solution of the system
$\Amat\vec{u} = \Mmat \vec{f}$, i.e. $\vec{u}=\inv{\Amat}\Mmat\vec{f}$ 
(see e.g. \cite[ch. 3]{malek}). Thus 
$\|f_h\|^2_{-1, \Gamma}=\transp{\vec{f}}\Mmat\inv{\Amat}\Mmat\vec{f}$. The matrix
product in the expression is then $\Hmat{-1}$.% \\
\end{example}

In general let $\vec{c}$ be the representation of vector $\vec{u}\in\reals^m$ in
the basis of eigenvectors $\vec{u}_i$, $\vec{u}=\mat{U}\vec{c}$. Then 
\[
\transp{\vec{u}}\Hmat{s}\vec{u} = \transp{\vec{c}}\mat{\Lambda}^{s}\vec{c} = 
\sum_j c^2_j \lambda^{s}_j
\]
and so $\transp{\vec{u}}\Hmat{s}\vec{u}=\norm{u_h}^2_{{H}_{s}}$
for $u_h\in V_h$ such that  $u_h=\pi^{-1}_h\vec{u}$. Similar to the continuous 
case the norm can be obtained in terms of powers of an operator
\[
\transp{\vec{u}}\Hmat{s}\vec{u}
=
\transp{\close{\mat{U} \Lambda^{\tfrac{s}{2}} \transp{\open{\mat{MU}}} \vec{u}}}
\mat{M}
\close{\mat{U} \Lambda^{\tfrac{s}{2}} \transp{\open{\mat{MU}}} \vec{u}}
=
\transp{\close{\mat{S}^{-\tfrac{s}{2}} \vec{u}}}
\mat{M}
\close{\mat{S}^{-\tfrac{s}{2}} \vec{u}},
\]
where $\mat{S}=\inv{\mat{A}}\mat{M}$ is the matrix representation of the Riesz map
$H^{-1}\open{\Gamma}\rightarrow H^1_0\open{\Gamma}$ in the basis of $V_h$.
\begin{remark}
  The norms constructed above for the discrete space are equivalent
  to, but not identical to the $H_s$-norm from the continuous case.
\end{remark}

Before considering proper preconditioning of the weak formulation of
problem \eqref{pde} we illustrate the use of operator preconditioning with an
example of a boundary value problem where operators in fractional spaces are
utilized to weakly enforce the Dirichlet boundary conditions by Lagrange 
multipliers~\cite{babuska_lm}.

\begin{example}[Dirichlet boundary conditions using Lagrange multiplier]  
\label{ex:babuska_lm} 
The problem considered in \cite{babuska_lm} reads: Find $u$ such that 
\begin{equation}\label{eq:babuska_lm}
\begin{aligned}
  -\Delta u + u &= f &\text{ in }\Omega,\\
              u &= g   &\text{ on }\Gamma\subset\partial\Omega,\\
  \partial_{n}u &= 0 &\text{ on }\partial\Omega\setminus\Gamma.
\end{aligned}
\end{equation}
Introducing a Lagrange multiplier $p$ for the boundary value constraint and a 
trace operator $T: H^1(\Omega) \rightarrow H^{\half}(\Gamma)$ leads
to a variational problem for $\left(u, p\right) \in H^1\left(\Omega\right)\times
H^{-\half}\left(\Gamma\right)$ satisfying
\begin{equation}\label{eq:babuska}
\begin{aligned}
  &\inner{\nabla u}{\nabla v}_{\Omega} + \inner{u}{v}_{\Omega} + \brack{p,
  T v}_\Gamma = \inner{f}{v}_{\Omega} & v\in H^1\left(\Omega\right),\\
  &\brack{q, T u}_\Gamma = \brack{q, g}_{\Gamma} & q\in H^{-\half}\left(\Gamma\right).
  \end{aligned}
\end{equation}
In terms of the framework of operator preconditioning, 
the variational problem \eqref{eq:babuska} defines an equation
\begin{equation}\label{eq:Axb0}
  \mathcal{A}x=b, \quad \mbox{where}  
\quad 
\mathcal{A} = \begin{bmatrix}
  -\Delta_{\Omega} + I & T' \\
    T  & 0  \\
\end{bmatrix}.
\end{equation}
In \cite{babuska_lm} the problem is proved to be well-posed and
therefore  
$\mathcal{A}:V\rightarrow V^*$ is a symmetric isomorphism, where
$V=H^1\open{\Omega}\times H^{-\half}\open{\Gamma}$ and $x\in V$, $b\in V^*$. 
A preconditioner is then
$\mathcal{B}\in\mathcal{L}\open{V^*, V}$, constructed such that $\mathcal{B}$ is 
a positive, self-adjoint isomorphism. Then 
$\mathcal{BA}\in\mathcal{L}\open{V}$ is an isomorphism. 

To discretize \eqref{eq:Axb0} we shall here employ finite element spaces $V_h$ 
consisting of linear continuous finite elements where $\Gamma_h$
  is formed by the facets of $\Omega_h $, cf. Figure \ref{fig:scheme}.
  Stability of discretizations of \eqref{eq:babuska} (for the more general case where the discretization of $\Omega$ and $\Gamma$ are independent) 
   is studied e.g. in \cite{pitkaranta_lm} and \cite[ch. 11.3]{steinbach_book}.
  
%This discretization 
%will be proved stable in \S \ref{sec:Qcap_Vh}.
% is stable~\cite{pitkaranta_lm, pitkaranta_corners}. \MK{Make sure this is true,
% otherwise we can say that the proof that this is a good element will be done
% later.} 
The linear system resulting from discretization leads to the following 
system of equations
\begin{equation}
\label{eq:BAxBbh}
\mathbb{B} \mathbb{A} \vec{x} = \mathbb{B} \vec{b},  
\end{equation}
where
\[
\mathbb{B} = 
\begin{bmatrix}
                \inv{\Amat}  &  \\
                   & \inv{\Hmat{-\tfrac{1}{2}}}\\
                \end{bmatrix}
\quad \mbox{and} \quad 
\mathbb{A} = 
\begin{bmatrix}
                \Amat  &  \transp{\mat{B}}\\
                \mat{B} &      \\
                \end{bmatrix} . 
\]
The last block of the matrix preconditioner $\mathbb{B}$ is the inverse of the 
matrix constructed by \eqref{eq:H_def} (using discretization of an operator inducing 
the $H^1(\Gamma)$ norm on the second subspace of $V_h$) and matrix $\mathbb{B}\mathbb{A}$ 
has the same eigenvalues as operator $\mathcal{B}_h \mathcal{A}_h$.   

Tables \ref{tab:babuska_eigs} and \ref{tab:babuska_iters} consider the problem
\eqref{eq:babuska} with $\Omega$ the unit square and $\Gamma$ its left edge. In 
Table \ref{tab:babuska_eigs} we show the spectral condition number of the matrix 
$\mathbb{B} \mathbb{A}$ as a function of the discretization parameter $h$. 
It is evident that the condition number is bounded by a constant.

Table \ref{tab:babuska_iters} then reports the number of iterations required for 
convergence of the minimal residual method \cite{minres} with the 
system \eqref{eq:BAxBbh} of different sizes. The iterations are started from a random 
initial vector and for convergence it is required that $\vec{r}_k$, the $k$-th 
residuum, satisfies 
$\transp{\vec{r}_k}\overbar{\mathbb{B}} \vec{r}_k <
10^{-10}$. The operator $\overbar{\mathbb{B}}$ is the spectrally equivalent
approximation of $\mathbb{B}$ given as\footnotemark
\begin{equation}\label{eq:Bh_approx}
\overbar{\mathbb{B}} = \diag\open{\mbox{\normalfont{AMG}}\open{\mat{A}},
                            \mbox{\normalfont{LU}}\open{\mat{H}\open{-\tfrac{1}{2}}}}.
\end{equation}
The iteration count appears to be bounded independently of the size of the linear system. 
\footnotetext{
Here and in the subsequent numerical experiments AMG is the algebraic multigrid 
BOOMERAMG from the Hypre library \cite{Hypre} and LU is the direct solver from the UMFPACK 
library \cite{umfpack}. The libraries were accessed through the interaface
provided by PETSc \cite{petsc} version 3.5.3. To assemble the relevant matrices 
FEniCS library \cite{fenics} version 1.6.0 and its extension for block-structured 
systems cbc.block \cite{block} were used. The AMG preconditioner was used with
the default options except for coarsening which was set to Ruge-Stueben algorithm.
}

\begin{table}[t!]
  \centering
  \setlength\tabcolsep{4pt}
  \begin{minipage}{0.40\textwidth}
    \centering
    %\tablewidth=\textwidth
    \caption{The smallest and the largest eigenvalues and the spectral condition 
    number of matrix $\BBh \AAh$ from system \eqref{eq:BAxBbh}.
    }
    \footnotesize{
    \begin{tabular}{c|ccc}
      \hline
      $h$ & $\lambda_{\text{min}}$ & $\lambda_{\text{max}}$ & $\kappa$ \\
      \hline
    %  3.54E-01 & 0.311 & 1.748 & 5.621\\
      $1.77 \times 10^{-1}$ & 0.311 & 1.750 & 5.622\\
      $8.84 \times 10^{-2}$ & 0.311 & 1.750 & 5.622\\
      $4.42 \times 10^{-2}$ & 0.311 & 1.750 & 5.622\\
      $2.21 \times 10^{-2}$ & 0.311 & 1.750 & 5.622\\
      $1.11 \times 10^{-2}$ & 0.311 & 1.750 & 5.622\\
      \hline
    \end{tabular}
    }
    \label{tab:babuska_eigs} 
  \end{minipage}%
\hfill
  \begin{minipage}{0.55\textwidth}
  \centering
     \caption{The number of iterations required for convergence of the minimal
     residual method for system \eqref{eq:BAxBbh} with $\BBh$ replaced
     by the approximation \eqref{eq:Bh_approx}.
     } 
  \footnotesize{
    \begin{tabular}{c|c c}
    \hline
    size & $n_{\text{iters}}$ & $\norm{u-u_h}_{1, \Omega}$ \\
    \hline
4290 & 38 & $6.76 \times 10^{-2}$(1.00)\\
16770 & 40 & $3.38 \times 10^{-2}$(1.00)\\
66306 & 38 & $1.69 \times 10^{-2}$(1.00)\\
263682 & 38 & $8.45 \times 10^{-3}$(1.00)\\
1051650 & 39 & $4.23 \times 10^{-3}$(1.00)\\
    \hline
    \end{tabular}
     }
    \label{tab:babuska_iters} 
  \end{minipage}
\end{table}

Together the presented results indicate that the constructed preconditioner whose
discrete approximation utilizes matrices \eqref{eq:H_def} is a
good preconditioner for system \eqref{eq:babuska_lm}. 
\end{example}

Finally, with $\Omega\in\reals^2$, $\Gamma\subset\Omega$ of codimension one we
consider the problem \eqref{pde}. The weak formulation of \eqref{pde1}--\eqref{coupling}, 
using the method of Lagrange multipliers, defines a variational problem for the triplet 
$\left(u, v, p\right)\in U \times V \times Q$
\begin{equation}
  \label{eq:weak_form}
\begin{aligned}
  \inner{\nabla u}{\nabla \phi}_{\Omega} + \brack{p, \epsilon T_\Gamma \phi}_\Gamma &=
  \inner{f}{\phi}_{\Omega} 
  \quad\quad&\phi &\in U,
  \\
  \inner{\nabla v}{\nabla \psi}_{\Gamma} - \brack{p, \psi}_\Gamma &=
  \inner{g}{\psi}_{\Gamma}
  \quad\quad&\psi &\in V,
  \\
  \brack{\chi, \epsilon T_\Gamma u - v}_\Gamma &= 0
  \quad\quad&\chi &\in Q,
\end{aligned}
\end{equation}
where $U, V, Q$ are Hilbert spaces to be specified later. 
The well-posedness of \eqref{eq:weak_form} is guaranteed provided that the 
celebrated Brezzi conditions, see Appendix A, are fulfilled. 
We remark that 
\[
\brack{p, T_\Gamma \phi}_\Gamma = \brack{\delta_\Gamma p,  \phi}_\Omega.  
\]
Hence $\delta_\Gamma$ is in our context the dual operator to the trace operator $T_\Gamma$.
Since $T_\Gamma: H^1_0(\Omega) \rightarrow H^{\half}_{00}(\Gamma)$,
then $\delta_\Gamma:  H^{\nhalf}(\Gamma) \rightarrow H^{-1}(\Omega)$.
 
For our discussion
of preconditioners it is suitable to recast \eqref{eq:weak_form} as an operator
equation for the self-adjoint operator $\mathcal{A}$
\begin{equation}
  \label{eq:op_long}
\mathcal{A}\begin{bmatrix}u \\v \\p \end{bmatrix}=
\begin{bmatrix}
  \AU &     & \BU^{*}\\
      & \AV & \BV^{*}\\
  \BU & \BV &       \\
\end{bmatrix}\begin{bmatrix}u \\ v \\ p\end{bmatrix}=
\begin{bmatrix}f \\ g \\ \mbox{ } \end{bmatrix}
\end{equation}
with the operators $A_i, B_i, i\in\set{U, V}$ given by
\[
\begin{aligned}
\brack{\AU u, \phi}_\Omega &= \inner{\nabla u }{\nabla \phi}_{\Omega},
&\brack{\AV v, \psi}_\Gamma &= \inner{\nabla v}{\nabla \psi}_{\Gamma},
\\
\brack{\BU u, \chi}_\Gamma   &= \brack{\chi, \epsilon T_\Gamma u}_\Gamma,
&\brack{\BV v, \chi}_\Gamma &=-\brack{\chi, v}_\Gamma.
\end{aligned}
\]
Further, for discussion of mapping properties of $\mathcal{A}$ it will be 
advantageous to consider the operator as a map defined over space $W\times Q$, 
$W=U\times V$ as
\begin{equation}\label{eq:op_short}
\mathcal{A}=
\begin{bmatrix}
A & B^*\\
B & \\
\end{bmatrix}
\quad \mbox{with}
\quad 
A= \begin{bmatrix}
A_U & \\
 & A_V \\
\end{bmatrix}
\quad \mbox{and}
\quad 
B= \begin{bmatrix}
B_U & B_V 
\end{bmatrix}.
\end{equation}

Considering two different choices of spaces $U, V$ and $Q$ we will propose 
two formulations that lead to different preconditioners
\begin{equation}\label{eq:operator_preconditionersQ}
  \BBQ^{-1}
%  \mathcal{B}^{-1}_{Q\mbox{-cap}}
  =
  \begin{bmatrix}
    \AU &     & \\
        & \AV & \\
        &   & \BU \AU^{-1} \BU^{*}  + \BV \AV^{-1} \BV^{*}      \\ 
  \end{bmatrix}
\end{equation}
and
\begin{equation}\label{eq:operator_preconditionersV}
\BBW^{-1}
%  \mathcal{B}^{-1}_{W\mbox{-cap}}
  =
  \begin{bmatrix}
    \AU + \BU ^{*} R \BU &     & \\
        & \AV & \\
        &  &   \BV \AV^{-1} \BV^{*}       \\
  \end{bmatrix}.
\end{equation}
Here $R$ is the Riesz map from $Q^*$ to $Q$. Preconditioners of the form
\eqref{eq:operator_preconditionersQ}--\eqref{eq:operator_preconditionersV} will
be referred to as the $Q$-cap and the $W$-cap preconditioners. This naming 
convention reflects the role intersection spaces play in the respected 
formulations. We remark that the definitions should be understood as templates 
identifying the correct structure of the preconditioner.
%In fact, a direct matrix 
%representation of the operator $\BBQ$ from \eqref{eq:operator_preconditionersQ} is 
%the matrix preconditioner of \cite{murphy}, 
%which does not represent an efficient 
%preconditioner unless there exists a cheap approximation of the Schur
%complement.

% TwoD with eigenvalue based preconditioner
\section{$Q$-cap preconditioner}\label{sec:Qcap}
Consider operator $\mathcal{A}$
from problem \eqref{eq:op_long} as a mapping $W\times Q \rightarrow W^*\times Q^*$, 
\begin{equation}
  \label{eq:V_Q_1}
  \begin{aligned}
    W  &= H^1_0\left(\Omega\right)\times  H^1_0\left(\Gamma\right),
    \\
    Q  &=\epsilon H^{-\half}(\Gamma) \cap H^{-1}\left(\Gamma\right).
  \end{aligned}
\end{equation}
The spaces are equipped with norms
\begin{equation}\label{eq:eigen_norms}
\norm{w}^2_W = \HnormO{u}^2 + \HnormG{v}^2 \quad
  \text{ and }\quad
  \norm{p}^2_Q = \epsilon^2\norm{p}_{-\half, \Gamma}^2 + \norm{p}^2_{-1, \Gamma}.
\end{equation}
Since $H^{-\half}(\Gamma)$ is continuously embedded in
$H^{-1}(\Gamma)$, the space $Q$ is
the same topological vector space as $H^{-\half}(\Gamma)$, but
equipped with an equivalent, $\epsilon$-dependent inner product.
See also \cite[ch. 2]{BerghLofstrom}. The next theorem shows that this 
definition leads to a well-posed problem.

We will need a right inverse of the trace operator and employ the following
harmonic extension. Let $q \in H^{\half}_{00}(\Gamma)$  and
let $u$ be the solution of the problem
\begin{equation}\label{eq:harmonic_ext}
\begin{aligned}
-\Delta u &= 0,  &\mbox{ in }  \Omega \setminus \Gamma, \\  
       u &= 0,   &\mbox{ on }  \partial \Omega,  \\ 
       u &= q,   &\mbox{ on }  \Gamma.   
\end{aligned}
\end{equation}
Since trace is surjective onto $H^{\half}_{00}(\Gamma)$,
\eqref{eq:harmonic_ext} has a solution $u\in H^1_0(\Omega)$ and
$|u|_{1, \Omega} \le C |q|_{{\half}, \Gamma}$ for some constant
$C$. We denote the harmonic extension operator by $E$, i.e., $u = E
q$ with $\norm{E}  \leq C$.
\begin{theorem}\label{thm:stab_eig}
Let $W$ and $Q$ be the spaces \eqref{eq:V_Q_1}. The operator $\mathcal{A}:W\times Q\rightarrow W^*\times Q^*$,
defined in \eqref{eq:op_long} is an isomorphism and the condition number of 
$\mathcal{A}$ is bounded independently of $\epsilon >0$.
\end{theorem}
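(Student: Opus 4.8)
The plan is to verify the Babu\v{s}ka--Brezzi conditions (Appendix A) for the bilinear forms induced by the blocks $A$ and $B$ of \eqref{eq:op_short}, taking care that every constant is independent of $\epsilon$, and then to invoke the Brezzi theorem, which yields that $\mathcal{A}$ is an isomorphism with bounds on $\norm{\mathcal{A}}$ and $\norm{\mathcal{A}^{-1}}$ --- hence on $\kappa(\mathcal{A})$ --- depending only on those constants. For the $A$ block this is immediate and $\epsilon$-free: $\brack{Aw,w} = \HnormO{u}^2 + \HnormG{v}^2 = \norm{w}^2_W$, and $\abs{\brack{Aw,w'}}\le\norm{w}_W\norm{w'}_W$ by Cauchy--Schwarz, so the form of $A$ is bounded and coercive on all of $W$ with constant $1$; in particular it is coercive on $\ker B$. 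Continuity of $B$ is equally direct: for $w=(u,v)\in W$ and $q\in Q$ one has $\brack{Bw,q}=\epsilon\brack{q,T_\Gamma u}_\Gamma-\brack{q,v}_\Gamma$, and using the trace bound $\norm{T_\Gamma u}_{\half,\Gamma}\le C\HnormO{u}$, the pairings $\abs{\brack{q,T_\Gamma u}_\Gamma}\le\norm{q}_{-\half,\Gamma}\norm{T_\Gamma u}_{\half,\Gamma}$ and $\abs{\brack{q,v}_\Gamma}\le\norm{q}_{-1,\Gamma}\HnormG{v}$, together with $\epsilon\norm{q}_{-\half,\Gamma}\le\norm{q}_Q$ and $\norm{q}_{-1,\Gamma}\le\norm{q}_Q$, gives $\abs{\brack{Bw,q}}\le C\norm{q}_Q\norm{w}_W$ with $C$ independent of $\epsilon$.

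The heart of the argument is the $\epsilon$-uniform inf--sup condition for $B$, which I would prove by exhibiting a good test function for a given $q\in Q$. Let $v_q\in H^1_0(\Gamma)$ be the Riesz representative of $q$ regarded as an element of $H^{-1}(\Gamma)=(H^1_0(\Gamma))^*$, so that $\HnormG{v_q}=\norm{q}_{-1,\Gamma}$ and $\brack{q,v_q}_\Gamma=\norm{q}^2_{-1,\Gamma}$; and let $\xi_q\in\HGO$ be the Riesz representative of $q\in\Dmh=(\HGO)^*$, so that $\norm{\xi_q}_{\half,\Gamma}=\norm{q}_{-\half,\Gamma}$ and $\brack{q,\xi_q}_\Gamma=\norm{q}^2_{-\half,\Gamma}$ (here the $\HGO$-norm and its dual are used up to the norm equivalences of \S\ref{sec:prelim}, the discrepancy being absorbed into constants). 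Put $u_q=E\xi_q$, so $T_\Gamma u_q=\xi_q$ and $\HnormO{u_q}\le C\norm{q}_{-\half,\Gamma}$ with $C=\norm{E}$, and we may assume $C\ge1$. Choosing $w=\bigl((\epsilon/C^2)u_q,\,-v_q\bigr)$ one computes $\brack{Bw,q}=(\epsilon^2/C^2)\norm{q}^2_{-\half,\Gamma}+\norm{q}^2_{-1,\Gamma}\ge C^{-2}\bigl(\epsilon^2\norm{q}^2_{-\half,\Gamma}+\norm{q}^2_{-1,\Gamma}\bigr)=C^{-2}\norm{q}^2_Q$, while $\norm{w}^2_W=(\epsilon^2/C^4)\HnormO{u_q}^2+\HnormG{v_q}^2\le(\epsilon^2/C^2)\norm{q}^2_{-\half,\Gamma}+\norm{q}^2_{-1,\Gamma}\le\norm{q}^2_Q$. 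Hence $\sup_{w\in W}\brack{Bw,q}/\norm{w}_W\ge C^{-2}\norm{q}_Q$, i.e. the inf--sup constant $\beta=C^{-2}$ is independent of $\epsilon$.

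With the three conditions established with $\epsilon$-independent constants, the Brezzi theorem gives that $\mathcal{A}\colon W\times Q\to W^*\times Q^*$ is an isomorphism and bounds $\norm{\mathcal{A}}$ and $\norm{\mathcal{A}^{-1}}$ in terms of these constants only; therefore $\kappa(\mathcal{A})=\norm{\mathcal{A}}\,\norm{\mathcal{A}^{-1}}$ is bounded independently of $\epsilon>0$. I expect the only genuinely delicate point to be the scaling $\alpha\sim\epsilon$ of the trace-component of the test function, which is exactly what matches the $\epsilon$-weighting of the intersection norm on $Q$ and keeps $\beta$ from degenerating; a secondary, minor subtlety is to take the two Riesz representatives with respect to the correct pairings --- $(\HGO)^*$ for the $\Dmh$ part and $(H^1_0(\Gamma))^*$ for the $H^{-1}(\Gamma)$ part --- and to observe that the harmonic-extension constant $\norm{E}$ from \eqref{eq:harmonic_ext} is, by construction, genuinely independent of $\epsilon$. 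Everything else is routine bookkeeping.
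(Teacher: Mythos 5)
Your proof is correct, and it follows the paper's overall skeleton (verify the four Brezzi conditions with $\epsilon$-independent constants; the $A$-block and the boundedness of $B$ are handled identically), but the key step --- the $\epsilon$-uniform inf--sup condition --- is argued by a genuinely different route. The paper substitutes $u=\epsilon^{-1}E\xi$ and then invokes the duality between intersection and sum spaces: it identifies $Q^{*}=\epsilon^{-1}H^{\half}_{00}(\Gamma)+H^{1}_0(\Gamma)$ with the weighted infimum norm and uses $\norm{q}_{Q^{**}}=\norm{q}_Q$ to conclude that the supremum over the sum space reproduces the intersection norm exactly. You instead construct an explicit near-optimal test function from the two Riesz representatives, $w=\bigl((\epsilon/C^2)E\xi_q,\,-v_q\bigr)$, and verify the lower bound by direct computation; the scaling $\epsilon/C^2$ is precisely what matches the $\epsilon$-weight in $\norm{\cdot}_Q$, and your bookkeeping ($\brack{Bw,q}\ge C^{-2}\norm{q}_Q^2$, $\norm{w}_W\le\norm{q}_Q$) is sound. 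Your argument is more elementary and self-contained --- it avoids any appeal to the $(X\cap Y)^{*}=X^{*}+Y^{*}$ identity and produces the explicit constant $\beta=\max(1,\norm{E})^{-2}$ --- at the cost of the small caveat you already flag, namely that the Riesz representatives are taken with respect to norms only equivalent to (not equal to) the $H_s$-scale norms, which harmlessly perturbs the constants. The paper's duality argument is slicker, makes the identity $\sup_{\zeta\in Q^*}\brack{q,\zeta}/\norm{\zeta}_{Q^*}=\norm{q}_Q$ do all the work, and foreshadows the interpolation-space structure exploited elsewhere in the paper, but both yield the same conclusion with $\epsilon$-independent constants, so the condition-number bound follows in either case.
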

\begin{proof}
  The statement follows from the Brezzi theorem \ref{thm:brezzi} once its 
  assumptions are verified.
% By Cauchy-Schwarz inequality it holds that
% \[
%   \begin{split}
%     \brack{Au, v} &\leq \epsilon\HnormO{u_1}\epsilon\HnormG{v_1} +
%                         \tau\HnormO{u_2}\tau\HnormG{v_2}\\
%                   &\leq
% \sqrt{
% \epsilon^2\HnormO{u_1}^2 + \tau^2\HnormG{u_2}^2
% }
% \sqrt{
% \epsilon^2\HnormO{v_1}^2 + \tau^2\HnormG{v_2}^2
% }\\
% & = \norm{u}_V\norm{v}_V
% \end{split}
% \]
% and thus $\norm{A}=1$ and \eqref{eq:Brezzi_A_bounded} is satisfied. Further $\brack{Au, u}=\norm{u}_V$ so that
% $\alpha=1$ and \eqref{eq:Brezzi_A_coercivity} follows.
% Boundedness of $B$ follows from boundedness of the trace
% $\norm{Tu}_{\HGO}\leq \gamma \semi{u}_{\HO}$
% and Cauchy-Schwarz inequality
Since $A$ induces the inner product on $W$, $A$ is continuous and coercive and
the conditions \eqref{eq:Brezzi_A_bounded} and \eqref{eq:Brezzi_A_coercivity} hold.
%\KAM{make clear with reference to equations.}
Next, we see that $B$ is bounded,
\begin{equation*}
  \begin{split}
    \brack{Bw, q}_{\Gamma}
    & = \brack{q, \epsilon T_\Gamma u - v}_{\Gamma}
    \\
    &\leq
    \norm{q}_{{\nhalf}, \Gamma}\norm{\epsilon T_\Gamma u}_{{\half}, \Gamma}
    + \norm{q}_{{-1}, \Gamma}\semi{v}_{1, \Gamma}
    \\
    &\leq \big(1+\norm{T_\Gamma}\big)
    \sqrt{\epsilon^2\norm{q}_{\nhalf, \Gamma}^2 + \norm{q}_{{-1}, \Gamma}^2}
    \sqrt{\semi{u}_{1, \Omega}^2 + \semi{v}_{{1}, \Gamma}^2}
    \\
    & = \big(1+\norm{T_\Gamma}\big)
    \norm{q}_Q\norm{w}_W.
  \end{split}
\end{equation*}
It remains to show the inf-sup
condition \eqref{eq:Brezzi_infsup}. Since the trace is bounded and surjective, for all
$\xi \in H^{\half}_{00}(\Gamma)$ we let $u$ be defined in terms of the harmonic
extension \eqref{eq:harmonic_ext}  such that $ u = \epsilon^{-1} E \xi$
and $\semi{u}_{1,\Omega}\leq \epsilon^{-1}\norm{E}\norm{\xi}_{\half, \Gamma}$. Hence,
\begin{equation*}
\begin{split}
  \sup_{w\in W}\, \frac{\brack{B w, q}_{\Gamma}}{\norm{w}_{W}}
  &= \sup_{w\in W} 
  \frac{\brack{q, \epsilon T_\Gamma u - v}_{\Gamma}} {\sqrt{\HnormO{u}^2 + \HnormG{v}^2}}
  \\
  &\geq \big(1 + \norm{E}\big)^{-1}
  \sup_{(\xi, v)\in H^{\half}_{00}(\Gamma) \times \HG}\, \frac{\brack{q, \xi + v}_{\Gamma}}
  {\sqrt{\epsilon^{-2}\norm{\xi}_{\half, \Gamma}^2 + \norm{v}_{1, \Gamma}^2}}
\end{split}
\end{equation*}
Note that we have the identity
\begin{equation*}
  Q^* = \big(\epsilon {H}^{-\half}(\Gamma) \cap 
      H^{-1}(\Gamma) \big)^*
      = \epsilon^{-1} {H}^{\half}_{00}(\Gamma) +
      H^{1}_0(\Gamma),
\end{equation*}
equipped with the norm
\begin{equation*}
  \norm{q^*}_{Q^*} = 
  \inf_{q^* = q^*_1+q^*_2} 
  \epsilon^{-2}\norm{q_1^*}_{\half, \Gamma}^2
  + \semi{q_2^*}_{1, \Gamma}^2.
\end{equation*}
See also \cite{BerghLofstrom}. It follows that
\begin{equation*}
    \begin{split}
    \sup_{(\xi, v)\in H^{\half}(\Gamma) \times \HG}\, \frac{\brack{q, \xi + v}_{\Gamma}}
    {\sqrt{\epsilon^{-2}\norm{\xi}_{\half, \Gamma}^2 + \HnormG{v}^2}}
    &=\sup_{\zeta \in Q^*}\, \sup_{\substack{
        \xi + v = \zeta \\  
        %(\xi,v)\in H^1_{00}(\Gamma)\times H^1_0(\Gamma)
        v \in H^1_0(\Gamma)
        }}\,
    \frac{\brack{q, \xi + v}_{\Gamma}}
    {\sqrt{\epsilon^{-2}\norm{\xi}_{\half, \Gamma}^2 + \HnormG{v}^2}}
    \\
    &=\sup_{\zeta \in Q^*}
    \frac{\brack{q,\zeta}_{\Gamma}}
    %{\displaystyle \inf_{\xi + u_2 = \zeta}\, \sqrt{\gamma^{-2} \norm{\xi}^2 +   \norm{u_2}^2}}
    {\displaystyle\inf_{\substack{
        \xi + v = \zeta \\  
        %(\xi,v)\in H^1_{00}(\Gamma)\times H^1_0(\Gamma)
        v \in H^1_0(\Gamma)
        }}
      \sqrt{\epsilon^{-2}\norm{\xi}_{\half, \Gamma}^2 + \HnormG{v}^2}}
    \\
    &=\norm{q}_{Q^{**}}
    =\norm{q}_{Q}.
  \end{split}
\end{equation*}
Consequently, condition \eqref{eq:Brezzi_infsup} holds 
with a constant independent of $\epsilon$. \qquad
\end{proof}

Following Theorem \ref{thm:stab_eig} and \cite{kent_ragnar} a preconditioner for
the symmetric isomorphic operator $\mathcal{A}$ is the Riesz mapping $W^*\times Q^*$
to $W\times Q$
\begin{equation}\label{eq:Qcap_op}
%\mathcal{B}_{Q\mobx{-cap}}=
\BBQ=
\begin{bmatrix}
  -\Delta_{\Omega} & & \\
                   & -\Delta_{\Gamma} & \\
		   & &   & \epsilon^2\Delta_{\Gamma}^{\nhalf} +
	%\Delta_{\Gamma}^{\Scale[0.5]{-1}}
        \Delta_{\Gamma}^{-1}
\end{bmatrix}^{-1}.
\end{equation}
Here $\Delta_\Gamma^s$ is defined by $\brack{\Delta_\Gamma^s v, w}_\Gamma =
(v, w)_{H_s}$, with the $H_s$-inner product defined by
\eqref{eq:scale_norm}.  Hence the norm induced on
$W\times Q$ by the operator $\BBQ^{-1}$ is not \eqref{eq:eigen_norms}
but an equivalent norm
\[
  \brack{\inv{\BBQ} x, x}
  = \semi{u}_{1, \Omega}^2 + \semi{v}_{1, \Gamma}^2
  + \epsilon^2\norm{p}^2_{H_{\nhalf}(\Gamma)}
  + \norm{p}^2_{H_{-1}(\Gamma)}
\]
for any $x=\open{u, v, p}\in W\times Q$.  Note that $\BBQ$ fits the template defined in
$\eqref{eq:operator_preconditionersQ}$.
\subsection{Discrete $Q$-cap preconditioner}\label{sec:Qcap_discrete}
\begin{figure}[ht!]
\centering
%
%\begin{subfigure}[b]{0.3\textwidth}
%  \includestandalone[width=\textwidth]{./img/final/scheme} 
% \includegraphics[width=\textwidth]{./img/final/scheme.0} 
%  \caption{}
%  \label{fig:geometry}
%\end{subfigure}
%
%
\begin{subfigure}[b]{0.35\textwidth}
    \includegraphics[height=0.2\textheight]{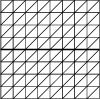} 
  \caption{}
	  \label{fig:mesh_1}
\end{subfigure}
\hfill
\begin{subfigure}[b]{0.35\textwidth}
    \includegraphics[height=0.201\textheight]{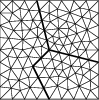} 
  \caption{}
  \label{fig:mesh_4}
\end{subfigure}
\caption{
%Schematic of the geometry considered in the analysis (a). Geometries
Geometrical configurations and their sample triangulations considered in the numerical 
experiments.} 
\label{fig:scheme}
\end{figure}

Following Theorem \ref{thm:stab_eig} the $Q$-cap preconditioner \eqref{eq:Qcap_op}
is a good preconditioner for operator equation $\mathcal{A}x=b$ with the
condition number independent of the material parameter $\epsilon$. To translate
the preconditioned operator equation $\BBQ\mathcal{A}x=\BBQ b$ into
a stable linear system it is necessary to employ suitable discretization. In
particular, the Brezzi conditions must hold on each approximation space 
$W_h\times Q_h$ with constants independent of the discretization parameter $h$.
Such a suitable discretization will be referred to as stable.

Let us consider a stable discretization of operator $\mathcal{A}$ from Theorem
\ref{thm:stab_eig} by finite dimensional spaces $U_h, V_h$ and $Q_h$ defined as
\[
U_h = \spn\set{{\phi_i}}_{i=1}^{n_{{U}}},
\quad
V_h = \spn\set{{\psi_i}}_{i=1}^{n_{{V}}},
\quad
Q_h = \spn\set{{\chi_i}}_{i=1}^{n_{{Q}}}.
\]
Then the Galerkin method for problem \eqref{eq:op_long} reads: Find $(u_h, v_h,
p_h)\in U_h\times V_h\times Q_h$ such that
\[
\begin{aligned}
  \inner{\nabla u_h}{\nabla \phi}_{\Omega} + \brack{p_h, \epsilon T_\Gamma \phi}_\Gamma &=
  \inner{f}{\phi}_{\Omega} 
  \quad\quad&\phi &\in U_h,
  \\
  \inner{\nabla v_h}{\nabla \psi}_{\Gamma} - \brack{p_h, \psi}_\Gamma &=
  \inner{g}{\psi}_{\Gamma}
  \quad\quad&\psi &\in V_h,
  \\
  \brack{\chi, \epsilon T_\Gamma u_h - v_h}_\Gamma &= 0
  \quad\quad&\chi &\in Q_h.
\end{aligned}
\]
Further we shall define matrices $\mat{A}_{U}$, $\mat{A}_{V}$ and 
$\mat{B}_{U}$, $\mat{B}_{V}$ in the following way
\begin{equation}\label{eq:mat_defs}
\begin{aligned}%{2}
\mat{A}_{{U}} &\in\reals^{n_U\times n_U}, &%\qquad &
\open{\mat{A}_{{U}}}_{i, j}&=\inner{\nabla \phi_j}{\nabla \phi_i}_{\Omega},
\\
\mat{A}_{{V}} &\in\reals^{n_V\times n_V}, &
\open{\mat{A}_{{V}}}_{i, j}&=\inner{\nabla \psi_j}{\nabla \psi_i}_{\Gamma},
\\
\mat{B}_{{U}}&\in\reals^{n_Q\times n_U}, &
\open{\mat{B}_{{U}}}_{i, j}&=\brack{\epsilon T_{\Gamma} \phi_j, \chi_i}_{\Gamma},
\\
\mat{B}_{{V}}&\in\reals^{n_Q\times n_V},&
\open{\mat{B}_{{V}}}_{i, j}&=-\brack{\psi_j, \chi_i}_{\Gamma}.
\end{aligned}
\end{equation}
We note that $\mat{B}_{{V}}$ can be viewed as a representation of the negative
identity mapping between spaces $V_h$ and $Q_h$. Similarly, matrix $\mat{B}_{{U}}$ can be 
viewed as a composite, $\mat{B}_{{U}}=\mat{M}_{{\overbar{U}Q}}\mat{T}$. Here
$\mat{M}_{{\overbar{U}Q}}$ is the representation of an identity map from space
$\overbar{U}_h$ to space $Q_h$. The space $\overbar{U}_h$ is the image of $U_h$ under the 
trace mapping $T_{\Gamma}$. We shall respectively denote the dimension of the space and its 
basis functions $n_{{\overbar{U}}}$ and $\overbar{\phi}_i$, $i\in\close{1,
n_{{\overbar{U}}}}$. Matrix $\mat{T}\in\reals^{n_{{\overbar{U}}} \times n_{{U}}}$
is then a representation of the trace mapping $T_{\Gamma}: U_h
\rightarrow\overbar{U}_h$.

We note that the rank of $\mat{T}$ is $n_{{Q}}$ and mirroring the continuous 
operator $T_{\Gamma}$ the matrix has a unique right inverse $\mat{T}^{+}$. We refer 
to \cite{trace_inverse} for the continuous case. The matrix $\mat{T}^{+}$ can 
be computed as a pseudoinverse via the reduced singular value decomposition 
$\mat{T}\mat{U}=\mat{Q}\mat{\Sigma}$, see e.g. \cite[ch. 11]{trefethen}. Then
$\mat{T}^{+}=\mat{U}\inv{\mat{\Sigma}}\mat{Q}$. Here, the 
columns of $\mat{U}$ can be viewed as coordinates of functions $\overbar{\phi}_i$ 
zero-extended to $\Omega$ such that they form the $l^2$ orthonormal basis of the 
subspace of $\reals^{n_{{U}}}$ where the problem $\mat{T}\vec{u}=\overbar{\vec{u}}$ 
is solvable. Further the kernel of $\mat{T}$ is spanned by $n_{U}$-vectors 
representing those functions in $U_h$ whose trace on $\Gamma$ is zero. 

For the space $U_h$ constructed by the finite element method with the
triangulation of $\Omega$ such that $\Gamma$ is aligned with the element
boundaries, cf. Figure \ref{fig:scheme}, it is a consequence of the
nodality of the basis that $\mat{T}^{+}=\transp{\mat{T}}$.

With definitions \eqref{eq:mat_defs} we use $\mathbb{A}$ to represent the operator 
$\mathcal{A}$ from \eqref{eq:op_long} in the basis of $W_h\times Q_h$
\begin{equation}\label{eq:eigen_matrix}
\AAh =
\begin{bmatrix}
\Amat_{{U}} &                 & \transp{\Bmat_{{U}}}\\
                & \Amat_{{V}} & \transp{\Bmat_{{V}}}\\
\Bmat_{{U}} & \Bmat_{{V}} &                         \\
\end{bmatrix}.
\end{equation}
Finally a discrete $Q$-cap preconditioner is defined as a matrix representation
of \eqref{eq:Qcap_op} with respect to the basis of $W_h \times Q_h$
\begin{equation}\label{eq:Qcap_matrix}
\BBQh =
\begin{bmatrix}
\Amat_{{U}} &                 & \\
                & \Amat_{{V}} & \\
                &                 & \epsilon^2 \Hmat{-\tfrac{1}{2}} + \Hmat{-1}\\
\end{bmatrix}^{-1}.
\end{equation}
The matrices $\mat{A}$, $\mat{M}$ which are used to compute the values
$\Hmat{\cdot}$ through the definition \eqref{eq:H_def} have the property 
$\semi{p}^2_{1, \Gamma}=\transp{\vec{p}}\mat{A}\vec{p}$
and $\norm{p}^2_{0, \Gamma}=\transp{\vec{p}}\mat{M}\vec{p}$ for every $p\in Q_h$
and $\vec{p}\in\reals^{n_{{Q}}}$ its coordinate vector. Note that due to
properties of matrices $\Hmat{\cdot}$, matrix $\mat{N}_Q$, the inverse of the 
final block of $\BBQh$, is given by
\begin{equation}\label{eq:Qcap_inverse_formula}
\mat{N}_Q
=
\close{\epsilon^2 \Hmat{-\tfrac{1}{2}} + \Hmat{-1}}^{-1}
= 
\mat{U}\left[\epsilon^{2}\mat{\Lambda}^{\nhalf} +
\mat{\Lambda}^{\Scale[0.5]{-1}}\right]^{-1}\transp{\mat{U}}.
\end{equation}

By Theorem \ref{thm:stab_eig} and the assumption on spaces $W_h\times Q_h$ being
stable, the matrix $\BBQh\AAh$ has a spectrum bounded independent of the parameter $\epsilon$
and the size of the system or equivalently discretization parameter $h$. In turn 
$\BBQh$ is a good preconditioner for matrix $\AAh$. To demonstrate this property 
we shall now construct a stable discretization of the space $W\times Q$ using the 
finite element method. 

\subsection{Stable subspaces for $Q$-cap preconditioner}\label{sec:Qcap_Vh}
For $h>0$ fixed let $\Omega_h$ be the polygonal approximation of $\Omega$. For 
the set $\bar{\Omega}_h$ we construct a shape-regular triangulation consisting of
closed triangles $K_i$ such that $\Gamma\cap K_i$ is an edge $e_i$ of the
triangle. Let $\Gamma_h$ be a union of such edges. The discrete spaces 
$W_h\subset W$ and $Q_h\subset Q$ shall be defined in the following way. 
Let
\begin{equation}\label{eq:hcomps}
\begin{aligned}
U_h &= \set{v\in C\open{\overbar{\Omega}_h}\,:\, v|_{K}=\mathbb{P}_1\open{K}},\\
V_h &= \set{v\in C\open{\overbar{\Gamma}_h}\,:\, v|_{e}=\mathbb{P}_1\open{e}},\\
\end{aligned}
\end{equation}
where $\mathbb{P}_1\open{D}$ are linear polynomials on the simplex $D$. Then we
set 
\begin{equation}\label{eq:hspaces}
\begin{aligned}
W_h &= \open{U_h \cap H^1_0\open{\Omega}} \times \open{V_h \cap H^1_0\open{\Gamma}}, \\
Q_h &= {V_h \cap H^1_0\open{\Gamma}}.
\end{aligned}
\end{equation}

Let $A_h, B_h$ be the finite dimensional operators defined on the approximation spaces \eqref{eq:hspaces}
in terms of Galerkin method for operators $A, B$ in \eqref{eq:op_short}.
Since the constructed spaces are conforming the operators $A_h$, $B_h$ are continuous 
with respect to the norms \eqref{eq:eigen_norms}. Further $A_h$ is $W$-elliptic on 
$W_h$ since the operator defines an inner product on the discrete space. Thus to 
show that the spaces $W_h\times Q_h$ are stable it remains to show that the 
discrete inf-sup condition holds.

% This is lemma because its result would be used by Brezzi theorem
\begin{lemma}\label{Qcap_discrete_inf_sup} 
  Let $W_h\subset W$, $Q_h\subset Q$ be the spaces
  \eqref{eq:hspaces}. Further let
$\norm{\cdot}_W$, $\norm{\cdot}_Q$ be the norms \eqref{eq:eigen_norms}. Finally
let $B_h$ such that $\brack{B_h w_h, q_h}_{\Gamma}=\brack{B w, q_h}_{\Gamma}$, $w\in W$. There exists 
a constant $\beta>0$ such that
\begin{equation}\label{eq:discrete_inf_sup}
    \inf_{q_h\in Q_h}\,\sup_{w_h\in W_h}\, 
    \frac{\brack{B_h w_h, q_h}_{\Gamma}}{\norm{w_h}_W\norm{q_h}_Q} \geq \beta.
\end{equation}
\end{lemma}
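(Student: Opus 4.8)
The plan is to mimic the continuous inf-sup argument from the proof of Theorem~\ref{thm:stab_eig} at the discrete level, with the harmonic extension replaced by a suitable discrete right inverse of the trace. Fix $q_h \in Q_h = V_h \cap H^1_0(\Gamma)$. Following the continuous case, the natural candidate is to pick $w_h = (u_h, v_h)$ where $v_h$ is chosen to realize the $H^{-1}(\Gamma)$ part of the $Q^*$-norm and $\epsilon T_\Gamma u_h$ realizes the $H^{\half}_{00}(\Gamma)$ part. Concretely, given the $Q^*$-decomposition target $\zeta \in Q^*$ achieving $\norm{q_h}_Q = \brack{q_h,\zeta}_\Gamma/\norm{\zeta}_{Q^*}$, one splits $\zeta \approx \xi + v$ with $v \in H^1_0(\Gamma)$ and $\xi \in H^{\half}_{00}(\Gamma)$, discretizes $v$ by its interpolant or Galerkin projection $v_h \in V_h$, and takes $u_h = \epsilon^{-1} E_h \xi_h \in U_h$ where $E_h$ is a discrete harmonic (or just bounded, $H^1$-stable) extension of a suitable approximation $\xi_h$ of $\xi$ into $U_h \cap H^1_0(\Omega)$.

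The key steps, in order: (i) establish that the discrete trace operator $T_\Gamma : W_h \to Q_h$ admits a right inverse that is stable in the $W$-norm uniformly in $h$ and in $\epsilon^{-1}$ scaling — here is where the mesh-alignment hypothesis from \S\ref{sec:Qcap_Vh} enters, since $\Gamma_h$ is made of element edges, so $T_\Gamma$ maps $U_h \cap H^1_0(\Omega)$ onto $V_h \cap H^1_0(\Gamma) = Q_h$ and a discrete harmonic extension $E_h$ exists with $\semi{E_h \xi_h}_{1,\Omega} \le C \norm{\xi_h}_{\half,\Gamma}$, $C$ independent of $h$; (ii) since $\norm{\cdot}_{\half,\Gamma}$ here means $\norm{\cdot}_{H_{\half}}$ on the discrete scale \eqref{eq:scale_norm}, restricted to $Q_h$ this is exactly the norm against which $H_h(-\tfrac12)$ is built, so no inverse inequality is needed — the discrete scale norm is \emph{defined} to make this work; (iii) reproduce the chain of equalities/inequalities from the continuous proof, replacing $\sup_{(\xi,v)\in H^{\half}_{00}\times H^1_0}$ by $\sup$ over $V_h$, and using that $\brack{q_h,\cdot}_\Gamma$ tested against the discrete decomposition of any $\zeta_h \in Q_h^*$ recovers $\norm{q_h}_{Q_h^*} = \norm{q_h}_Q$ because $q_h \in Q_h$; (iv) collect the constant as $\beta = (1+C)^{-1}$ with $C = \norm{E_h}$, independent of $h$ and $\epsilon$.

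The main obstacle is step~(i): producing the $\epsilon$- and $h$-uniform stable discrete extension. One must verify that for $\xi_h \in Q_h$ the solution $u_h \in U_h \cap H^1_0(\Omega)$ of the discrete Dirichlet problem with data $\xi_h$ on $\Gamma_h$ and zero on $\partial\Omega_h$ satisfies $\semi{u_h}_{1,\Omega} \le C\norm{\xi_h}_{H_{\half}}$ with $C$ independent of $h$; since $u_h$ is the energy-minimal discrete extension it is bounded by \emph{any} discrete $H^1$-extension, so it suffices to exhibit one such extension whose $H^1$-seminorm is controlled by the $H^{\half}_{00}$-norm of the boundary data — e.g. via a Scott--Zhang-type quasi-interpolation of the continuous harmonic extension $E\xi$ composed with the observation that $\norm{\xi_h}_{H_{\half}}$ and $\norm{\xi_h}_{H^{\half}_{00}(\Gamma)}$ are equivalent uniformly in $h$ on $V_h$ (this last equivalence being precisely the content of $H_{\half} = H^{\half}_{00}(\Gamma)$ with $h$-uniform constants, a standard but nontrivial finite element fact, cf. \cite{CW-H-M}). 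Everything else is a transcription of the continuous argument; the boundedness of $B_h$ and $W$-ellipticity of $A_h$ are already noted in the text, so only \eqref{eq:discrete_inf_sup} remains, and the constant $\beta$ inherits $\epsilon$-independence from the $\epsilon^{-1}$ cancellation in $u_h = \epsilon^{-1}E_h\xi_h$ exactly as in the continuous proof.
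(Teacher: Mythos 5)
Your proposal follows essentially the same route as the paper's proof: reduce the discrete inf-sup to the dual (sum-space) characterization of $\norm{q_h}_Q$, handle the $H^{\half}_{00}(\Gamma)$ component by a discrete harmonic extension scaled by $\epsilon^{-1}$ so that $\epsilon$ cancels exactly as in the continuous argument, and handle the $H^1_0(\Gamma)$ component by a projection onto $V_h$. One detail in your sketch is not optional, however. The paper takes $v_h = \Pi p_2$ to be the $L^2(\Gamma)$ projection onto $V_h$, precisely because this gives $\brack{q_h, p_2 - v_h}_{\Gamma} = 0$ for every $q_h \in Q_h$ (using $Q_h = V_h \cap H^1_0(\Gamma)$), so the numerator of the continuous supremum is reproduced \emph{exactly} by the discrete pair; combined with the $H^1$-stability $\semi{\Pi p_2}_{1,\Gamma} \lesssim \semi{p_2}_{1,\Gamma}$ this closes the argument. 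With a nodal interpolant (one of the options you list) the consistency term $\brack{q_h, p_2 - v_h}_{\Gamma}$ does not vanish and is only bounded by $\norm{q_h}_{-1,\Gamma}\,\semi{p_2 - v_h}_{1,\Gamma} \leq C\,\norm{q_h}_{Q}\,\semi{p_2}_{1,\Gamma}$, which is of the same order as the quantity being estimated and cannot be absorbed; that branch of your step (iii) would therefore fail, and your appeal to "$\norm{q_h}_{Q_h^*} = \norm{q_h}_Q$ because $q_h \in Q_h$" is exactly the point that the $L^2$-orthogonality is needed to justify. On the other hand, your treatment of the discrete extension in step (i) (energy minimality, a quasi-interpolant of the continuous harmonic extension, and $h$-uniform equivalence of the discrete and continuous $H^{\half}$ norms on $V_h$) is more explicit than the paper, which simply asserts the bound $\epsilon\semi{u_h}_{1,\Omega} \leq C\norm{p_1}_{\half,\Gamma}$ for the discrete harmonic extension.
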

\begin{proof}
Recall $Q=\epsilon H^{-\half}(\Gamma) \cap H^{-1}\left(\Gamma\right)$.
We follow the steps of the continuous inf-sup condition in the reverse order. By
definition
\begin{equation}\label{eq:1234}
\begin{split}
\norm{q_h}_Q &= \sup_{p\in \epsilon H^{\half}_{00}\open{\Gamma} + H^1_0\open{\Gamma}}
\frac{\brack{q_h, p}_{\Gamma}}{\displaystyle\inf_{p=p_1+p_2}\sqrt{\epsilon^{-2}\norm{p_1}^2_{\half,
\Gamma} +
\semi{p_2}^2_{1, \Gamma}}}\\
&=\sup_{p}\sup_{p=p_1+p_2}
\frac{\brack{q_h, p_1}_\Gamma + %{\epsilon^{-1}H^{\nhalf}, \epsilon H^{\half}}+
       \brack{q_h, p_2}_\Gamma} %{H^{\none}, H^1_0}}
{\sqrt{\epsilon^{-2}\norm{p_1}^2_{\half, \Gamma} + \semi{p_2}^2_{1, \Gamma}}}
\end{split}.
\end{equation}
For each $p_1\in H^{\half}_{00}\open{\Gamma}$ let $u_h \in U_h$ the 
weak solution of the boundary value problem
\[
\begin{aligned}
-\Delta u &= 0 &\mbox{ in }\Omega,\\
\epsilon u & = p_1 &\mbox{ on }\Gamma,\\
         u & = 0 &\mbox{ on }\partial\Omega.
\end{aligned}
\]
Then $\epsilon T_{\Gamma} u_h = p_1$ in $H^{\half}_{00}\open{\Gamma}$
and $\epsilon\semi{u_h}_{1, \Omega}\leq C \norm{p_1}_{\half, \Gamma}$
for some constant $C$ depending only on $\Omega$ and $\Gamma$.
%\MK{The $v_h$ construction is fishy...}
For each $p_2\in
H^1_0\open{\Gamma}$ let $v_h\in V_h$ be the $L^2$ projection of $p_2$ onto the 
space $V_h$
\begin{equation}\label{eq:u2_cstr}
	\brack{v_h - p_2, z}_{\Gamma}=0 \quad z\in V_h.
\end{equation}
By construction we then have $\brack{q_h, p_2-v_h}_{\Gamma}=0$ for all $q_h\in Q_h$ and
  
    $\norm{v_h}_{0, \Gamma}\leq \norm{p_2}_{0, \Gamma}$. Moreover for shape regular
triangulation the projection $\Pi:H^1_0\open{\Gamma}\rightarrow V_h$, $v_h=\Pi p_2$
is bounded in the $H^1_0$ norm
\begin{equation}\label{eq:u2_estim}
  \semi{v_h}_{1, \Gamma} \leq \semi{p_2}_{1, \Gamma}.
\end{equation}
We refer to \cite[ch. 7]{braess} for this result. For constructed $u_h, v_h$
it follows from \eqref{eq:1234} that
\[
\begin{split}
\norm{q_h}_Q &\lesssim
\sup_{w_h \in {U}_h + V_h} \, \sup_{w_h=u_h+v_h}
\frac{\brack{q_h, \epsilon T_{\Gamma} u_h + v_h}_{\Gamma}}
{\sqrt{\semi{u_h}^2_{1, \Omega} + \semi{v_h}^2_{1, \Gamma}}}\\
&=
\sup_{\open{u_h, v_h}\in {U}_h\times V_h}
\frac{\brack{q_h, \epsilon T_{\Gamma} u_h + v_h}_{\Gamma}}{\norm{\open{u_h, v_h}}_{W}}
=
\sup_{w_h\in W_h}
\frac{\brack{B_h w_h, q_h}_{\Gamma}}{\norm{w_h}_{W}}.
\end{split}
\]
\end{proof}
% Remark on necessary condition for conforming discretization.
The constructed stable discretizations \eqref{eq:hspaces} are a special case of
conforming spaces built from $U_{h; k}\subset H^1\open{\Omega}$
and $V_{h; l}\subset H^1\open{\Gamma}$ defined as
\begin{equation}\label{eq:high_order}
\begin{aligned}
U_{h; k} &= \set{v\in C\open{\overbar{\Omega}_h}\,:\, v|_{K}=\mathbb{P}_k\open{K}},\\
V_{h; l} &= \set{v\in C\open{\overbar{\Gamma}_h}\,:\, v|_{e}=\mathbb{P}_l\open{e}}.\\
\end{aligned}
\end{equation}

The following corrolary gives a necessary compatibility condition on polynomial 
degrees in order to build inf-sup stable spaces from components \eqref{eq:high_order}.
% This is corollary because it follows from previous lemma
\begin{corollary}\label{Qcap_infsup_must}
Let $W_{h; k, l}=\open{U_{h;k} \cap H^1_0\open{\Omega}} \times 
\open{V_{h; l} \cap H^1_0\open{\Gamma}}$ and $Q_{h;m}= V_{h; m} \cap H^1_0\open{\Gamma} $. The necessary 
condition for \eqref{eq:discrete_inf_sup} to hold with space 
$W_{h; k,l}\times Q_{h;m}$ is
that $m\leq \max\open{k, l}$.
\end{corollary}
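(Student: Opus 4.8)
The plan is to prove the contrapositive: assuming $m > \max(k,l)$, I would show that on every sufficiently fine mesh the infimum--supremum quantity in \eqref{eq:discrete_inf_sup} is in fact zero, so that no $h$-independent $\beta > 0$ can exist. The mechanism is to construct a nonzero $q_h \in Q_{h;m}$ that is annihilated by $B_h$, i.e. $\brack{B_h w_h, q_h}_\Gamma = 0$ for all $w_h \in W_{h;k,l}$; then the supremum in \eqref{eq:discrete_inf_sup} vanishes while $\norm{q_h}_Q > 0$.

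First I would unpack the bilinear form. For $w_h = (u_h, v_h)$ one has $\brack{B_h w_h, q_h}_\Gamma = \epsilon \brack{q_h, T_\Gamma u_h}_\Gamma - \brack{q_h, v_h}_\Gamma$, where on the discrete level both pairings are the $L^2(\Gamma)$ inner product. Since $u_h$ ranges over $U_{h;k}\cap\HO$ and $v_h$ over $V_{h;l}\cap\HG$ independently, this expression vanishes for every $w_h$ precisely when $q_h$ is $L^2(\Gamma)$-orthogonal both to the discrete trace space $T_\Gamma\big(U_{h;k}\cap\HO\big)$ and to $V_{h;l}\cap\HG$. The next step is to localize these two targets on $\Gamma_h$: because $\Gamma$ is resolved by element edges, the restriction to $\Gamma_h$ of a continuous piecewise-$\mathbb{P}_k$ function on $\Omega_h$ is again continuous and piecewise $\mathbb{P}_k$, so $T_\Gamma\big(U_{h;k}\cap\HO\big)$ is contained in $V_{h;k}$, seen as functions on $\Gamma_h$. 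With $\mathbb{P}_k + \mathbb{P}_l = \mathbb{P}_{\max(k,l)}$ edgewise, the span of the two targets is then a subspace $Z_h \subseteq V_{h;\max(k,l)}$.

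The decisive step is a dimension count on the one-dimensional mesh $\Gamma_h$. Writing $N$ for the number of edges of $\Gamma_h$ (so $N+1$ vertices), one has $\dim V_{h;\max(k,l)} = N\max(k,l) + 1$ and $\dim Q_{h;m} = \dim\big(V_{h;m}\cap\HG\big) = Nm - 1$. Since $m > \max(k,l)$ gives $Z_h \subseteq V_{h;\max(k,l)} \subseteq V_{h;m}$, the $L^2(\Gamma)$-orthogonal complement of $Z_h$ inside $Q_{h;m}$ has dimension at least $\dim Q_{h;m} - \dim Z_h \ge N\big(m - \max(k,l)\big) - 2 \ge N - 2$, which is positive once $\Gamma_h$ has at least three edges. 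Taking $q_h \ne 0$ in this complement yields $\brack{B_h w_h, q_h}_\Gamma = 0$ for all $w_h \in W_{h;k,l}$, and since a stable discretization requires \eqref{eq:discrete_inf_sup} uniformly as $h \to 0$ (hence $N \to \infty$), the condition $m \le \max(k,l)$ is indeed necessary.

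I expect the only point needing care to be the characterization of the discrete trace space $T_\Gamma\big(U_{h;k}\cap\HO\big)$: this is the sole place where the geometric hypothesis that $\Gamma$ is aligned with element edges is used, and it is what secures the inclusion $Z_h \subseteq V_{h;\max(k,l)}$ with the dimension recorded above (members of the trace space need not vanish at endpoints of $\Gamma$ interior to $\Omega$, but they still lie in $V_{h;k}$, so the inclusion is unaffected). The remainder is elementary bookkeeping of one-dimensional finite element degrees of freedom; no approximation or regularity estimates enter.
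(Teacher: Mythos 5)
Your proposal is correct and follows essentially the same route as the paper: observe that $T_\Gamma u_h - v_h$ is a continuous piecewise polynomial of degree $\max(k,l)$ on $\Gamma_h$, and for $m>\max(k,l)$ exhibit a nonzero $q_h\in Q_{h;m}$ that is $L^2(\Gamma)$-orthogonal to all such functions, forcing the inf-sup quantity to vanish. You merely make explicit the dimension count (and the correct quantifier order, one $q_h$ annihilating all of $W_{h;k,l}$) that the paper's terse proof leaves implicit.
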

\begin{proof}
Note that $T_{\Gamma} u_h - v_h$ is piecewise polynomial of degree $\max\open{k, l}$.
Suppose $m > \max\open{k, l}$. Then for each $\open{u_h, v_h}\in W_{h;k,l}$ we can
find a orthogonal polynomial $0\neq q_h \in Q_{h;m}$ such that
\[
  \brack{q_h, T_{\Gamma} u_h - v_h}_{\Gamma} = 0.
\]
In turn $\beta=0$ in \eqref{eq:discrete_inf_sup} and the discrete inf-sup
condition cannot hold.
\end{proof}

\subsection{Numerical experiments}\label{sec:Qcap_numerics}
Let now $\AAh$, $\BBQh$ be the matrices \eqref{eq:eigen_matrix},
\eqref{eq:Qcap_matrix} assembled over the constructed stable spaces
\eqref{eq:hspaces}. We demonstrate the robustness of the $Q$-cap preconditioner 
\eqref{eq:Qcap_op} through a pair of numerical experiments.
%%%%%
First, the \textit{exact} preconditioner represented by the matrix $\BBQh$ is considered
and we are interested in the condition number of $\BBQh\AAh$ for different
values of the parameter $\epsilon$. The spectral condition number is computed
from the smallest and largest (in magnitude) eigenvalues of the generalized 
eigenvalue problem $\AAh \mat{x}=\lambda \BBQh^{\Scale[0.5]{-1}}\mat{x}$, which is here
solved by SLEPc
\footnote{We use generalized Davidson method with Cholesky preconditioner and 
convergence tolerance $10^{-8}$.} \cite{slepc}. 
The obtained results are reported in Table \ref{tab:eigs_Qcap}.
In general, the condition numbers are well-behaved indicating that $\BBQh$ 
defines a parameter robust preconditioner. We note that for $\epsilon \ll 1$ the
spectral condition number is close to $\sfrac{\open{1+\sqrt{5}}}{\open{\sqrt{5}-1}}\approx 2.618$.
In \S \ref{sec:Qcap_schur} this observation is explained by the relation of the 
proposed preconditioner $\BBQh$ and the matrix preconditioner of Murphy et al.
\cite{murphy}.
%%%%
%%%%%%%%%%%%%%%%%%%%%%%%%%%%%%
% 4 eigenvalues of -1/2
%%%%%%%%%%%%%%%%%%%%%%%%%%%%%%
\begin{table}[ht!]
  \begin{center}
  \caption{
Spectral condition numbers of matrices $\BBQh\AAh$ for the system assembled on 
geometry (a) in Figure \ref{fig:scheme}.
  }
\footnotesize{
\begin{tabular}{ll|ccccccc}
%\hline
%  \multicolumn{2}{l|}{} & \multicolumn{7}{c}{$\epsilon$}\\
%\hline
%  $n$ & $n_Q$ & $10^{-3}$ & $10^{-2}$ & $10^{-1}$ & $10^{0}$ & $10^{1}$ & $10^{2}$ & $10^{3}$\\
%\hline
\hline
  \multirow{2}{*}{size} & \multirow{2}{*}{$n_Q$} & \multicolumn{7}{c}{$\log_{10}\epsilon$}\\
  \cline{3-9}
  & & $-3$ & $-2$ & $-1$ & $0$ & $1$ & $2$ & $3$\\
\hline
% 25    &   5 & 2.633 & 2.760 & 3.686 & 5.716 & 6.427 & 6.514 & 6.523\\
99    &   9 & 2.655 & 2.969 & 4.786 & 6.979 & 7.328 & 7.357 & 7.360\\
323   &  17 & 2.698 & 3.323 & 5.966 & 7.597 & 7.697 & 7.715 & 7.717\\
1155  &  33 & 2.778 & 3.905 & 7.031 & 7.882 & 7.818 & 7.816 & 7.816\\
4355  &  65 & 2.932 & 4.769 & 7.830 & 8.016 & 7.855 & 7.843 & 7.843\\
16899 & 129 & 3.217 & 5.857 & 8.343 & 8.081 & 7.868 & 7.854 & 7.852\\
66563 & 257 & 3.710 & 6.964 & 8.637 & 8.113 & 7.872 & 7.856 & 7.855\\
\hline
\label{tab:eigs_Qcap}
\end{tabular}
}
\end{center}
\end{table}
%%%%%%%%%%%%%%%%%%%%%%%%%%%%%%

In the second experiment, we monitor the number of iterations required for 
convergence of the MinRes method \cite{minres}(the implementation is provided by 
cbc.block \cite{block}) applied to the preconditioned equation 
$\overbar{\mathbb{B}}_Q\AAh \vec{x}= \overbar{\mathbb{B}}_Q\vec{b}$. The
operator $\overbar{\mathbb{B}}_Q$ is an efficient and spectrally equivalent 
approximation of $\BBQh$,
\begin{equation}\label{eq:Qcap_precond_approx}
\overbar{\mathbb{B}}_Q = 
\begin{bmatrix}
\text{AMG}\open{\Amat_{U}} &                           & \\
                               & \text{LU}{\open{\Amat_{V}}} & \\
                               &   &      \mat{N}_Q\\
\end{bmatrix},
\end{equation}
with $\mat{N}_Q$ defined in \eqref{eq:Qcap_inverse_formula}. The iterations are 
started from a random initial vector and as a stopping criterion a condition on 
the magnitude of the $k$-th preconditioned residual $\vec{r}_k$, 
$\transp{\vec{r}_k}\overbar{\mathbb{B}}_Q \vec{r}_k < 10^{-12}$ is used. The 
observed number of iterations is shown in Table \ref{tab:Qcap_iters}. 
Robustness with respect to size of the system and the material parameter is evident 
as the iteration count is bounded for all the considered discretizations and
values of $\epsilon$.
%%%%%%%%%%%%%%%%%%%%%%%%%%%%%%
% 5 iterations of -1/2
%%%%%%%%%%%%%%%%%%%%%%%%%%%%%%
\begin{table}[ht!]
  \begin{center}
  \caption{
    Iteration count for convergence of
 $\overbar{\mathbb{B}}_Q \AAh \vec{x}=\overbar{\mathbb{B}}_Q\vec{b}$ 
 solved with the minimal residual method. The problem is assembled on geometry
    (a) from Figure \ref{fig:scheme}.
}
\footnotesize{
\begin{tabular}{ll|ccccccc}
%  \cline{3-9}
%\hline
%  \multicolumn{2}{l|}{} & \multicolumn{7}{c}{$\epsilon$}\\
%\hline
%  $n$ & $n_Q$ & $10^{-3}$ & $10^{-2}$ & $10^{-1}$ & $10^{0}$ & $10^{1}$ & $10^{2}$ & $10^{3}$\\
%\hline
\hline
  \multirow{2}{*}{size} & \multirow{2}{*}{$n_Q$} & \multicolumn{7}{c}{$\log_{10}\epsilon$}\\
  \cline{3-9}
  & & $-3$ & $-2$ & $-1$ & $0$ & $1$ & $2$ & $3$\\
\hline
66563     &  257 & 20 & 34 & 37 & 32 & 28 & 24 & 21\\
264195    &  513 & 22 & 34 & 34 & 30 & 26 & 24 & 20\\
1052675   & 1025 & 24 & 33 & 32 & 28 & 26 & 22 & 18\\
4202499   & 2049 & 26 & 32 & 30 & 26 & 24 & 20 & 17\\
8398403   & 2897 & 26 & 30 & 30 & 26 & 22 & 19 & 15\\
11075583  & 3327 & 26 & 30 & 30 & 26 & 22 & 19 & 15\\
\hline
  \label{tab:Qcap_iters}
\end{tabular}
}
\end{center}
\end{table}
% %%%%%%%%%%%%%%%%%%%%%%%%%%%%%%

Comparing Tables \ref{tab:eigs_Qcap} and \ref{tab:Qcap_iters} we observe that the 
$\epsilon$-behavior of the condition number and the iteration counts are different. 
In particular, fewer iterations are required for $\epsilon=10^{3}$ than for 
$\epsilon=10^{-3}$ while the condition number in the former case is larger. Moreover, 
the condition numbers for $\epsilon>1$ are almost identical whereas the iteration 
counts decrease as the parameter grows. We note that these observations should
be viewed in the light of the fact that the convergence of the
minimal residual method in general does not depend solely on the condition number, 
e.g. \cite{liesen_tichy}, and a more detailed knowledge of the eigenvalues is
required to understand the behavior.

Having proved and numerically verified the properties of the $Q$-cap
preconditioner, we shall in the next section link $\BBQh$ to a block diagonal 
matrix preconditioner suggested by Murphy et al. \cite{murphy}. Both matrices are assumed to be
assembled on the spaces \eqref{eq:hspaces} and the main objective of the section is 
to prove spectral equivalence of the two preconditioners.

\subsection{Relation to Schur complement preconditioner}\label{sec:Qcap_schur}
Consider a linear system $\AAh\vec{x}=\vec{b}$ with an indefinite matrix
\eqref{eq:eigen_matrix} which shall be preconditioned by a block diagonal 
matrix 
\begin{equation}\label{eq:Schur_mat_preconditioner}
	\mathbb{B}=\diag\open{{\Amat_{{U}}}, {\Amat_{{V}}},
	{\mat{S}}}^{\Scale[0.5]{-1}},
  \quad
\mat{S}=
\Bmat_{{U}}\inv{\Amat_{{U}}}\transp{\Bmat_{{U}}} +
\Bmat_{{V}}\inv{\Amat_{{V}}}\transp{\Bmat_{{V}}},
\end{equation}
where $\mat{S}$ is the negative Schur complement of $\AAh$.
Following \cite{murphy} the spectrum of $\mathbb{B}\AAh$ consists of three
distinct eigenvalues. In fact $\rho\open{\mathbb{B}\AAh}=\set{1,
\tfrac{1}{2} \pm \tfrac{1}{2}\sqrt{5}}$. A suitable Krylov 
method is thus expected to converge in no more than three iterations. However 
in its presented form $\mathbb{B}$ does not define an efficient preconditioner.
In particular, the cost of setting up the Schur complement comes close to inverting 
the system matrix $\AAh$. Therefore a cheaply computable approximation of
$\mat{S}$ is needed to make the preconditioner practical (see e.g. 
\cite[ch. 10.1]{benziliesen} for an overview of generic methods for constructing 
the approximation). We proceed to show that if spaces \eqref{eq:hspaces} are
used for discretization, the Schur complement is more efficiently approximated 
with the inverse of the matrix $\mat{N}_{Q}$ defined in \eqref{eq:Qcap_inverse_formula}.

Let $W_h, Q_h$ be the spaces $\eqref{eq:hspaces}$. Then the mass matrix
$\mat{M}_{{\overbar{U}Q}}=\mat{M}_{{VQ}}$ (cf. discussion prior to
\eqref{eq:mat_defs}) and the matrix will be referred to as $\mat{M}$. Moreover
let us set $\mat{A}_{{V}}=\Amat$. With these definitions the Schur complement 
of $\AAh$ reads
\begin{equation}\label{eq:murphy_schur}
\mat{S}=
\epsilon^2\mat{M}\mat{T}\inv{\Amat_{{U}}}\transp{\mat{T}}\mat{M} +
\mat{M}\inv{\Amat}\mat{M}.
\end{equation}
Further, note that such matrices $\mat{A}$, $\mat{M}$ are suitable for
constructing the approximation of the ${H}_s$ norm on the
space $Q_h$ by the mapping \eqref{eq:H_def}. In particular, $\mat{A}$ is such 
that $\semi{p}^2_{1, \Gamma}=\transp{\vec{p}}\mat{A}\vec{p}$ with $p\in Q_h$ 
and $\vec{p}\in\reals^{n_{{Q}}}$ its coordinate vector. In turn the inverse of
the matrix $\mat{N}_{Q}$ reads
\begin{equation}\label{eq:murphy_Ninv}
\inv{\mat{N}_{Q}}=
\open{\mat{M}\mat{U}}
\open{\epsilon^{2}\mat{\Lambda}^{\nhalf}+\mat{\Lambda}^{\Scale[0.5]{-1}}}
\transp{\open{\mat{M}\mat{U}}}
=
\epsilon^2\Hmat{-\tfrac{1}{2}} + \Hmat{-1}.
\end{equation}
Recalling that $\Hmat{-1}=\mat{M}\inv{\mat{A}}\mat{M}$ and contrasting 
\eqref{eq:murphy_schur} with \eqref{eq:murphy_Ninv} the matrices differ 
only in the first terms. We shall first show that if the terms are spectrally 
equivalent then so are $\mat{S}$ and $\inv{\mat{N}_Q}$. 
% Main results using lemma - theorem
\begin{theorem}\label{thm:spec_eq}
Let $\mat{S}$, $\inv{\mat{N}_Q}$ be the matrices defined respectively in
\eqref{eq:murphy_schur} and \eqref{eq:murphy_Ninv} and let $n_{Q}$ be their size. 
Assume that there exist positive constants $c_1, c_2$ dependent only on 
$\Omega$ and $\Gamma$ such that for every $n_Q>0$ and any 
$\vec{p}\in\reals^{n_{{Q}}}$
\[
  c_1 \transp{\vec{p}} \Hmat{-\tfrac{1}{2}} \vec{p}
  \leq
  \transp{\vec{p}} \mat{M}\mat{T}\inv{\Amat_{{U}}}\transp{\mat{T}}\mat{M} \vec{p}
  \leq
  c_2 \transp{\vec{p}} \Hmat{-\tfrac{1}{2}} \vec{p}.
\]
Then for each $n_Q>0$ matrix $\mat{S}$ is spectrally equivalent with 
$\inv{\mat{N}_Q}$.
\end{theorem}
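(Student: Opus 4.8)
The plan is to reduce the assertion to the hypothesis by exploiting that $\mat{S}$ and $\inv{\mat{N}_Q}$ share a common second summand. From \eqref{eq:murphy_schur}, \eqref{eq:murphy_Ninv} and the identity $\Hmat{-1}=\mat{M}\inv{\Amat}\mat{M}$ we have
\begin{equation*}
  \mat{S} = \epsilon^2\,\mat{M}\mat{T}\inv{\Amat_{{U}}}\transp{\mat{T}}\mat{M} + \Hmat{-1},
  \qquad
  \inv{\mat{N}_Q} = \epsilon^2\,\Hmat{-\tfrac{1}{2}} + \Hmat{-1},
\end{equation*}
so the two last terms coincide and it only remains to carry the assumed two-sided bound between the first terms through multiplication by $\epsilon^2$ and addition of the common term $\Hmat{-1}$.

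First I would record that every matrix involved is symmetric positive semidefinite and that $\Hmat{-1}=\mat{M}\inv{\Amat}\mat{M}$ is in fact positive definite (since $\mat{M}$ and $\Amat$ are), so that both $\mat{S}$ and $\inv{\mat{N}_Q}$ are symmetric positive definite; in particular the notion of spectral equivalence is well posed regardless of the rank of $\mat{M}\mat{T}\inv{\Amat_{{U}}}\transp{\mat{T}}\mat{M}$. Next, fixing $\vec{p}\in\reals^{n_{{Q}}}$, I would multiply the assumed inequalities by $\epsilon^2>0$ and add $\transp{\vec{p}}\Hmat{-1}\vec{p}$ throughout to obtain
\begin{equation*}
  \epsilon^2 c_1\,\transp{\vec{p}}\Hmat{-\tfrac{1}{2}}\vec{p} + \transp{\vec{p}}\Hmat{-1}\vec{p}
  \;\leq\;
  \transp{\vec{p}}\mat{S}\vec{p}
  \;\leq\;
  \epsilon^2 c_2\,\transp{\vec{p}}\Hmat{-\tfrac{1}{2}}\vec{p} + \transp{\vec{p}}\Hmat{-1}\vec{p}.
\end{equation*}

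Finally, using the elementary bounds $\epsilon^2 c_1 a + b \geq \min(c_1,1)(\epsilon^2 a + b)$ and $\epsilon^2 c_2 a + b \leq \max(c_2,1)(\epsilon^2 a + b)$, valid for all $a,b\geq 0$, applied with $a=\transp{\vec{p}}\Hmat{-\tfrac{1}{2}}\vec{p}\geq 0$ and $b=\transp{\vec{p}}\Hmat{-1}\vec{p}\geq 0$, I would conclude
\begin{equation*}
  \min(c_1,1)\,\transp{\vec{p}}\inv{\mat{N}_Q}\vec{p}
  \;\leq\;
  \transp{\vec{p}}\mat{S}\vec{p}
  \;\leq\;
  \max(c_2,1)\,\transp{\vec{p}}\inv{\mat{N}_Q}\vec{p}
  \qquad\text{for all }\vec{p}\in\reals^{n_{{Q}}},
\end{equation*}
which is precisely the spectral equivalence of $\mat{S}$ and $\inv{\mat{N}_Q}$, with equivalence constants $\min(c_1,1)$ and $\max(c_2,1)$ depending only on $\Omega$ and $\Gamma$ (through $c_1,c_2$) and, importantly, independent of both $\epsilon$ and $n_Q$.

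I do not anticipate a genuine obstacle within this theorem: once the common-summand observation is made, the argument is just monotonicity of quadratic forms under addition and positive scaling. The real work lies entirely in the \emph{hypothesis} — establishing that $\mat{M}\mat{T}\inv{\Amat_{{U}}}\transp{\mat{T}}\mat{M}$ is spectrally equivalent to $\Hmat{-\tfrac{1}{2}}$ uniformly in $n_Q$ — which is where the mapping properties of the trace operator, the harmonic extension, and the interpolation characterization $H_{\half}=H^{\half}_{00}(\Gamma)$ enter; that is deliberately assumed here and treated separately.
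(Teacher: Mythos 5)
Your proposal is correct and follows essentially the same route as the paper: expand the quadratic form of $\mat{S}$, apply the assumed two-sided bound to the $\epsilon^2\mat{M}\mat{T}\inv{\Amat_{{U}}}\transp{\mat{T}}\mat{M}$ term, and absorb constants into the common expression $\epsilon^2\transp{\vec{p}}\Hmat{-\tfrac{1}{2}}\vec{p}+\transp{\vec{p}}\Hmat{-1}\vec{p}=\transp{\vec{p}}\inv{\mat{N}_Q}\vec{p}$. The only cosmetic difference is your upper equivalence constant $\max(c_2,1)$, which is in fact slightly sharper than the paper's choice $C_2=\sqrt{1+c_2^2}$.
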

\begin{proof}
By direct calculation we have
\[
\begin{split}
\transp{\vec{p}} \mat{S} \vec{p}  &= 
\epsilon^2\transp{\vec{p}} \mat{M}\mat{T}\inv{\Amat_{{U}}}\transp{\mat{T}}\mat{M}\vec{p} + 
\transp{\vec{p}} \Hmat{-1} \vec{p}\\
&\leq
c_2 \epsilon^2 \transp{\vec{p}} \Hmat{-\tfrac{1}{2}} \vec{p} +
\transp{\vec{p}} \Hmat{-1} \vec{p}\\
&\leq C_2 \transp{\vec{p}} \inv{\mat{N}_Q} \vec{p}
\end{split}
\]
for $C_2=\sqrt{1+c^2_2}$. The existence of lower bound follows from estimate 
\[
\transp{\vec{p}} \mat{S} \vec{p}
\geq
  c_1\epsilon^2\transp{\vec{p}} \Hmat{-\tfrac{1}{2}} \vec{p} + 
\transp{\vec{p}} \Hmat{-1} \vec{p}\\
\geq 
C_1 \transp{\vec{p}} \inv{\mat{N}_Q} \vec{p}
\]
with $C_1=\min\open{1, c_1}$.\qquad
\end{proof}\\
Spectral equivalence of preconditioners $\BBQh$ and $\mathbb{B}$ now follows
immediately from Theorem \ref{thm:spec_eq}. Note that for $\epsilon \ll 1$ the
term $\Hmat{-1}$ dominates both $\mat{S}$ and $\inv{\mat{N}_Q}$. 
%In this limit we thus \textit{formally} have $\norm{\BBQh-\mathbb{B}}\ll 1$. 
In turn, the spectrum of $\mathbb{B}\AAh$ is expected to approximate well the eigenvalues of 
$\BBQh\AAh$. This is then a qualitative explanation of why the spectral condition 
numbers of $\BBQh\AAh$ observed for $\epsilon=10^{-3}$ in Table \ref{tab:eigs_Qcap}
are close to $\sfrac{\open{1+\sqrt{5}}}{\open{\sqrt{5}-1}}$. It remains to prove 
that the assumption of Theorem \ref{thm:spec_eq} holds.
% Lemma because it is used by spec_eq theorem
\begin{lemma}\label{lmm:trace}
There exist constants $c_1, c_2 > 0$ depending only on $\Omega, \Gamma$ such 
that for all $n_{{Q}}>0$ and $p \in \reals^{n_{{Q}}}$
\[
  c_1\transp{\vec{p}}\Hmat{-\tfrac{1}{2}}\vec{p}
  \leq
  \transp{\vec{p}} \Mmat\Tmat\inv{\Amat_{U}}\transp{\Tmat}\Mmat \vec{p} 
  \leq      
  c_2\transp{\vec{p}} \Hmat{-\tfrac{1}{2}}\vec{p}.
\]
\end{lemma}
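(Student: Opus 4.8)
The plan is to translate the matrix quantity $\transp{\vec p}\,\Mmat\Tmat\inv{\Amat_U}\transp{\Tmat}\Mmat\,\vec p$ into a function-space expression and then recognise it as (the square of) a norm on $Q_h$ equivalent to the $H_{\nhalf}$-norm, uniformly in $h$. First I would observe that, since the triangulation aligns $\Gamma$ with element edges, $\transp{\Tmat}$ extends a discrete trace by zero, and $\Tmat\inv{\Amat_U}\transp{\Tmat}$ is exactly the matrix of the discrete harmonic extension from $\overbar U_h = V_h\cap H^1_0(\Gamma)$ into $U_h$ followed by restriction to $\Gamma$; equivalently, for $p\in Q_h$ with coordinate vector $\vec p$, one has
\begin{equation*}
  \transp{\vec p}\,\Mmat\Tmat\inv{\Amat_U}\transp{\Tmat}\Mmat\,\vec p
  = \inf_{\substack{w_h\in U_h\\ T_\Gamma w_h = \Pi_{V_h}p}} \semi{w_h}_{1,\Omega}^2 ,
\end{equation*}
because $\Mmat\vec p$ is the load vector of the $L^2$-functional $z\mapsto(p,z)_\Gamma$ and the minimiser of the Dirichlet energy subject to a prescribed trace solves precisely the Galerkin problem encoded by $\Amat_U$. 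So the quantity is $\semi{E_h q_h}_{1,\Omega}^2$ where $q_h$ is the $L^2(\Gamma)$-projection of $p$ onto $V_h$ and $E_h$ is the discrete harmonic extension.

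Next I would bound this above and below by $\norm{p}_{H_{\nhalf}(\Gamma)}^2$ up to $h$-independent constants. For the \emph{upper} bound I would use that the discrete harmonic extension is energy-minimising, hence bounded by the continuous harmonic extension of the same boundary data, combined with the trace theorem: $\semi{E_h q_h}_{1,\Omega}\le \semi{E q_h}_{1,\Omega}\lesssim \norm{q_h}_{H^{\half}_{00}(\Gamma)}$, and then $L^2$-stability plus $H^1$-stability of $\Pi_{V_h}$ (inequalities \eqref{eq:u2_cstr}--\eqref{eq:u2_estim}) give, by interpolation, $\norm{q_h}_{H^{\half}_{00}(\Gamma)}=\norm{q_h}_{H_{\half}}\lesssim\norm{p}_{H_{\half}}=\norm{p}_{H^{\half}_{00}(\Gamma)}$. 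Recalling from Example \ref{ex:norms} that $\transp{\vec p}\Hmat{-\tfrac12}\vec p = \norm{p}_{H_{\nhalf}(\Gamma)}^2$ — wait, one must be careful: the matrix quantity sits at Sobolev index $-\tfrac12$, not $+\tfrac12$. The correct reading is via duality: $\transp{\vec p}\,\Mmat\Tmat\inv{\Amat_U}\transp{\Tmat}\Mmat\,\vec p = \semi{E_h q_h}_{1,\Omega}^2 = \sup_{z}\,\brack{p,z}_\Gamma^2/(\dots)$, and identifying this as a dual norm of the functional $z\mapsto (p,z)_\Gamma$ over the space whose norm is the discrete-harmonic-extension energy. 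Since the latter space is $h$-uniformly equivalent to $H^{\half}_{00}(\Gamma)$ by the two estimates just quoted, its dual is $h$-uniformly equivalent to $H_{\nhalf}(\Gamma)=H^{-\half}(\Gamma)$, which is exactly what $\Hmat{-\tfrac12}$ measures.

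For the \emph{lower} bound I would argue the reverse inequality: given the projected datum $q_h=\Pi_{V_h}p\in V_h$, the discrete harmonic extension realises at least a fixed fraction of the $H^{\half}_{00}$-norm of $q_h$ — this is a discrete trace inequality, i.e. the norm equivalence $\semi{E_h q_h}_{1,\Omega}\gtrsim \norm{q_h}_{H^{\half}_{00}(\Gamma)}$ uniformly in $h$, which holds because $T_\Gamma: U_h\to V_h$ is surjective (rank condition on $\Tmat$) and the discrete harmonic extension is the norm-minimising right inverse whose energy is $h$-uniformly comparable to the continuous minimiser; and then one needs $\norm{\Pi_{V_h}p}_{H_{\half}}\gtrsim\norm{p}_{H_{\half}}$ for $p\in Q_h=V_h\cap H^1_0(\Gamma)$, which is immediate since $\Pi_{V_h}p=p$ there. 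Dualising again gives $c_1\transp{\vec p}\Hmat{-\tfrac12}\vec p \le \transp{\vec p}\,\Mmat\Tmat\inv{\Amat_U}\transp{\Tmat}\Mmat\,\vec p$.

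The main obstacle I expect is making the $h$-uniform equivalence between the discrete-harmonic-extension energy and the $H^{\half}_{00}(\Gamma)$-norm rigorous — i.e. a genuine discrete trace theorem with constants independent of $h$, together with the matching inverse estimate. The upper bound there is soft (energy minimality plus the continuous trace theorem plus $H^1$-stability of the $L^2$-projection), but the lower bound needs either a Scott–Zhang-type quasi-interpolation argument or a reference to an established discrete trace inequality; the alignment of $\Gamma$ with element edges (so $\transp{\Tmat}$ is zero-extension and $\mat T^+=\transp{\mat T}$, as noted in the text) is what makes this clean. Everything else — rewriting the matrix triple product as a constrained energy minimisation, and the duality bookkeeping — is routine given the machinery already set up in Section \ref{sec:prelim}.
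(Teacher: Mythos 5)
Your overall strategy is in substance the same as the paper's, merely phrased through the Steklov--Poincar\'e operator rather than through the generalized eigenvalue problem $\transp{\Tmat}\Hmat{\tfrac{1}{2}}\Tmat\vec{u}=\lambda\Amat_{U}\vec{u}$: both arguments reduce the claim to the $h$-uniform boundedness of the discrete trace operator (giving one constant via the continuous trace inequality) and of a discrete right inverse of the trace (the genuinely discrete ingredient, for which the paper cites the boundedness of the continuous right inverse). However, your opening identity is false. Writing $\vec{w}=\inv{\Amat_{U}}\transp{\Tmat}\Mmat\vec{p}$, one finds $\transp{\vec{p}}\Mmat\Tmat\inv{\Amat_{U}}\transp{\Tmat}\Mmat\vec{p}=\semi{w_h}_{1,\Omega}^2=\bigl(\sup_{\phi\in U_h}\brack{p,T_\Gamma\phi}_\Gamma/\semi{\phi}_{1,\Omega}\bigr)^2$, i.e.\ the quantity is $\transp{(\Mmat\vec{p})}\inv{\mat{S}_h}(\Mmat\vec{p})$ with $\mat{S}_h$ the trace Schur complement of $\Amat_{U}$ --- a \emph{dual} norm of $p$ with respect to the discrete-harmonic-extension energy --- and not $\inf\{\semi{w_h}^2_{1,\Omega}:T_\Gamma w_h=\Pi_{V_h}p\}$, which is the quadratic form of $\mat{S}_h$ itself rather than of $\inv{\mat{S}_h}$ (test a highly oscillatory $p$: the two scale in opposite directions). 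You do pivot to the correct dual-norm reading mid-proof, but the first display has to go.

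Second, the inequality $\semi{E_h q_h}_{1,\Omega}\le\semi{Eq_h}_{1,\Omega}$ that you label ``soft'' is backwards: the discrete harmonic extension minimises the Dirichlet energy over the \emph{subspace} $U_h$, so its energy is $\ge$, not $\le$, that of the continuous minimiser. The direction that genuinely is soft is $\semi{E_h q_h}_{1,\Omega}\ge\semi{Eq_h}_{1,\Omega}\ge K_2^{-1}\norm{q_h}_{\half,\Gamma}$ (the continuous trace inequality applied to $E_hq_h\in H^1_0(\Omega)$), and after dualising this produces the \emph{upper} constant $c_2$; the hard direction is $\semi{E_hq_h}_{1,\Omega}\le C\norm{q_h}_{\half,\Gamma}$, which requires an actual discrete extension with controlled energy (Scott--Zhang interpolation of $Eq_h$, or the cited boundedness of the right inverse of the trace) and produces $c_1$. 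Your closing paragraph attributes the Scott--Zhang ingredient to the correct constant, so the two swaps happen to cancel in the final bookkeeping, but the justification of the primal equivalence as written does not stand. (A residual point, shared with the paper's own proof: both arguments silently use that the discrete interpolation norm $\transp{\vec{q}}\Hmat{\tfrac{1}{2}}\vec{q}$ is $h$-uniformly equivalent to the continuous $\norm{\cdot}_{\half,\Gamma}$-norm on the discrete trace space.)
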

\begin{proof}
For the sake of readability let $n=n_{{Q}}$ and $m=n_{{U}}$. Since $\Mmat$ 
is symmetric and invertible, 
$\Hmat{-\tfrac{1}{2}}=\Mmat\mat{U}\mat{\Lambda}^{\nhalf}\transp{\mat{U}}\Mmat$
and
$\mat{U}\mat{\Lambda}^{\nhalf}\transp{\mat{U}}={\Hmat{\tfrac{1}{2}}}^{\Scale[0.5]{-1}}$ the
statement is equivalent to 
\begin{equation}
  c_1\transp{\vec{y}} \inv{\Hmat{\tfrac{1}{2}}} \vec{y}
  \leq
  \transp{\vec{y}} \Tmat\inv{\Amat_{{U}}}\transp{\Tmat} \vec{y}
  \leq      
  c_2\transp{\vec{y}} \inv{\Hmat{\tfrac{1}{2}}} \vec{y}
  \quad \text{ for all } y \in \reals^{m}.
\end{equation}
The proof is based on properties of the continuous trace operator $T_\Gamma$.
Recall the trace inequality: There exists a positive constant
$K_2=K_2\left(\Omega,\Gamma\right)$ such that $\norm{T_\Gamma u}_{\half, \Gamma} \leq K_2 \semi{u}_{1, \Omega}$ 
for all $u\in H^1_0\open{\Omega}$. From here it follows 
that the sequence $\set{\lambda^{\text{max}}_m}$, where for each $m$ value
$\lambda^{\text{max}}_m$ is the largest eigenvalue of the
eigenvalue problem 
\begin{equation}\label{eq:spec_eq_1}
\transp{\Tmat}\Hmat{\tfrac{1}{2}}\Tmat\vec{u}=\lambda \Amat_{{U}}\vec{u},
\end{equation}
is bounded from above by $K_2$. Note that the eigenvalue problem can be solved with
nontrivial eigenvalue only for $\vec{u}\in\reals^{n}$ for which there exists some 
$\vec{q}\in\reals^m$ such that $\vec{u}=\transp{\Tmat}\vec{q}$. Consequently the 
eigenvalue problem becomes
$\transp{\Tmat}\Hmat{\tfrac{1}{2}}\vec{q}=\lambda\Amat_U\transp{\mat{T}}\vec{q}$. 
Next, applying the inverse of $\Amat_{U}$ and the trace matrix yields
$\Tmat\ninv{\Amat_{U}}\transp{\Tmat}\Hmat{\tfrac{1}{2}}\vec{q}=\lambda\vec{q}$. 
Finally, setting $\vec{q}=\ninv{\Hmat{\tfrac{1}{2}}}\vec{p}$ yields
\begin{equation}\label{eq:spec_eq_2}
\Tmat\inv{\Amat_{U}}\transp{\Tmat}\vec{p}=\lambda\inv{\Hmat{\tfrac{1}{2}}}\vec{p}.
\end{equation}
Thus the largest eigenvalues of \eqref{eq:spec_eq_1} and \eqref{eq:spec_eq_2}
coincide and in turn $C_2=K_2$. Further \eqref{eq:spec_eq_2} has only positive
eigenvalues and the smallest nonzero eigenvalue of 
\eqref{eq:spec_eq_1} is the smallest eigenvalue $\lambda^\text{{min}}_m$  of 
\eqref{eq:spec_eq_2}. Therefore for all $\vec{y}\in\reals^m$
it holds that 
$\lambda^{\text{min}}_m\transp{\vec{y}} \ninv{\Hmat{\tfrac{1}{2}}} \vec{y} 
\leq \transp{\vec{y}} \Tmat\ninv{\Amat_{U}}\transp{\Tmat} \vec{y}$. But the 
sequence $\seq{\lambda^{\text{min}}_m}$ is bounded from below since the right-inverse
of the trace operator is bounded \cite{trace_inverse}.
%: There exists positive 
%constant $K_1=K_1\left(\Omega,\Gamma\right)$ such that 
%$\semi{\transp{T_\Gamma}q}_{1, \Omega} \leq \tfrac{1}{K_1} \norm{q}_{\half,
%\Gamma}$
%for all $q\in {H}^{\half}\open{\Gamma}$. In turn $c_1=K_1$.
\qquad
\end{proof}

The proof of Lemma \ref{lmm:trace} suggests that the constants $c_1$, $c_2$ 
for spectral equivalence are computable as the limit of convergent sequences $\set{\lambda^{\text{min}}_m}$,
$\set{\lambda^{\text{max}}_m}$ consisting of the smallest and largest 
eigenvalues of the generalized eigenvalue problem \eqref{eq:spec_eq_2}.
Convergence of such sequences for the two geometries in Figure \ref{fig:scheme} 
is shown in Figure \ref{fig:trace_bounds}. For the simple geometry (a) the 
sequences converge rather fast and the equivalence constants $c_1, c_2$ are
clearly visible in the figure. Convergence on the more complex geometry (b) is 
slower. %\MK{Should we add splines and sines results?}
%%%%%%%%%%%%%%%%%%%%%%%%%%%%%%
% trace bounds 
%%%%%%%%%%%%%%%%%%%%%%%%%%%%%
\begin{figure}[t!]
\centering
\includegraphics[width=0.55\textwidth]{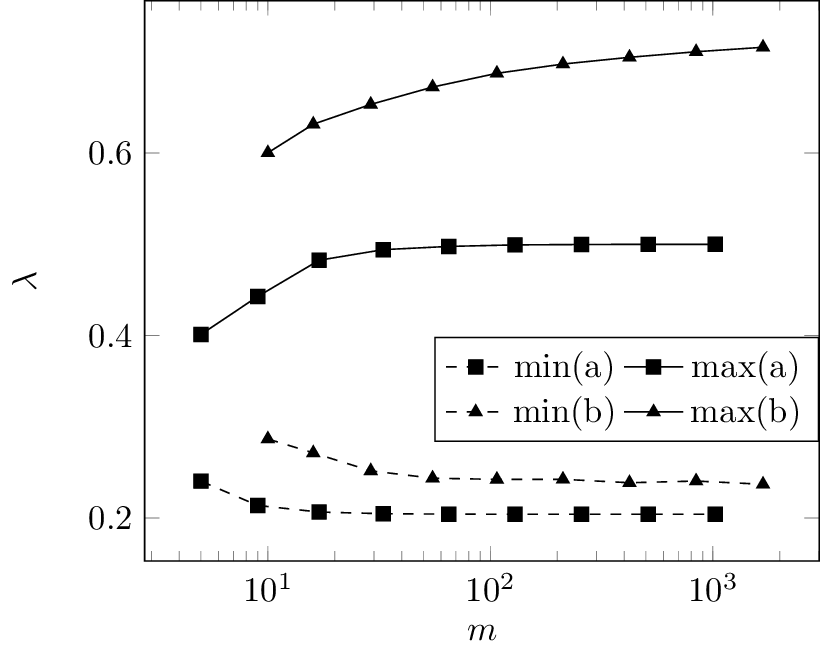}
\caption{Convergence of sequences $\set{\lambda^{\text{max}}_m}$
$\set{\lambda^{\text{min}}_m}$ from Lemma \ref{lmm:trace} for geometries in
Figure \ref{fig:scheme}. For all sequences but $\text{max}\open{b}$ the
constant bound is reached within the considered range of discretization
parameter $m=n_{{Q}}$.}
\label{fig:trace_bounds}
\end{figure}
%%%%%%%%%%%%%%%%%%%%%%%%%%%%%%

So far we have by Theorem \ref{thm:stab_eig} and Lemma \ref{Qcap_discrete_inf_sup} 
that the condition numbers of matrices $\BBQh\AAh$ assembled over spaces
\eqref{eq:hspaces} are bounded by constants independent of $\set{h, \epsilon}$. 
A more detailed characterization of the spectrum of the system preconditioned
by the $Q$-cap preconditioner is given next. In particular, we relate the
spectrum to computable bounds $C_1$, $C_2$ and characterize the distribution of
eigenvalues. Further, the effect of varying $\epsilon$ 
(cf. Tables \ref{tab:eigs_Qcap}--\ref{tab:Qcap_iters}) is illustrated by
numerical experiment.

\subsection{Spectrum of the $Q$-cap preconditioned system}\label{sec:Qcap_spectrum}
In the following, the left-right preconditioning of $\AAh$ based on $\BBQh$ is
considered and we are interested in the spectrum of 
\begin{equation}\label{eq:lr}
\BBQh^{\half} \AAh \BBQh^{\half}=
\begin{bmatrix}
\mat{I}_U &            & \Amat_U^{\nhalf}\transp{\mat{B}_U}{\mat{N}_Q}^{\half} \\
          &  \mat{I}_V & \Amat_V^{\nhalf}\transp{\mat{B}_V}{\mat{N}_Q}^{\half} \\
\mat{N}_Q^{\half}\mat{B}_U\Amat^{\nhalf}_U & \mat{N}_Q^{\half}\mat{B}_V\Amat^{\nhalf}_V & \\
\end{bmatrix}.
\end{equation}
The spectra of the left preconditioner system $\BBQh\AAh$ and the left-right 
preconditioned system $\BBQh^{\half} \AAh \BBQh^{\half}$ are identical. 
Using results of \cite{rusten} the
spectrum $\rho$ of \eqref{eq:lr} is such that $\rho=I^{-}\cup I^{+}$ with
\begin{equation}\label{eq:rusten}
I^{-} = \left[\frac{1-\sqrt{1+4\sigma^2_{\text{max}}}}{2},
              \frac{1-\sqrt{1+4\sigma^2_{\text{min}}}}{2}
        \right]
\quad\quad
I^{+} = \left[1, \frac{1+\sqrt{1+4\sigma^2_{\text{max}}}}{2}\right]
\end{equation}
and $\sigma_{\text{min}}, \sigma_{\text{max}}$ the smallest and largest singular
values of the block matrix formed by the first two row blocks in the last column
of $\BBQh^{\half} \AAh \BBQh^{\half}$. We shall denote the matrix as $\mathbb{D}$,
\[
\mathbb{D} =
\begin{bmatrix}
\Amat_U^{\nhalf}\transp{\mat{B}_U}{\mat{N}_Q}^{\half} \\
\Amat_V^{\nhalf}\transp{\mat{B}_V}{\mat{N}_Q}^{\half}
\end{bmatrix}.
\]
% This is prop. because it is 'top of the pyramid' but not really that import.
\begin{proposition}\label{prep:cond}
The condition number 
$\kappa\open{\BBQh\AAh}$ 
is bounded such that
\[
\kappa\open{\BBQh\AAh} \leq
\frac{1+\sqrt{1 + 4C_2}}{1 - \sqrt{1 + 4C_1}},
\]
where $C_1, C_2$ are the spectral equivalence bounds from Theorem \ref{thm:spec_eq}.
\end{proposition}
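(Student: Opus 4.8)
The plan is to read the condition number off the explicit spectral description \eqref{eq:rusten} and then to replace the singular values $\sigma_{\text{min}}$, $\sigma_{\text{max}}$ of $\mathbb{D}$ by the computable constants $C_1$, $C_2$ of Theorem \ref{thm:spec_eq}. First I would note that $\BBQh\AAh$ and the symmetrically preconditioned matrix $\BBQh^{\half}\AAh\BBQh^{\half}$ are similar, hence have the same (real) spectrum $\rho = I^{-}\cup I^{+}$, and that $\sigma_{\text{min}}>0$ because $\mat{S}$ is positive definite, so that both intervals are nondegenerate. Then the eigenvalue of largest modulus is the right endpoint of $I^{+}$, namely $\tfrac{1}{2}(1+\sqrt{1+4\sigma_{\text{max}}^2})$, while the eigenvalue of smallest modulus is $\min\set{1,\ \tfrac{1}{2}(\sqrt{1+4\sigma_{\text{min}}^2}-1)}$, the smaller in modulus of the endpoints of $I^{+}$ and $I^{-}$ nearest the origin; the condition number $\kappa\open{\BBQh\AAh}$ is the quotient of these two numbers.

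The second step is to connect $\sigma_{\text{min}}^2$ and $\sigma_{\text{max}}^2$ with the spectral equivalence of $\mat{S}$ and $\inv{\mat{N}_Q}$. A direct computation, using $\mat{S}=\Bmat_{U}\inv{\Amat_{U}}\transp{\Bmat_{U}}+\Bmat_{V}\inv{\Amat_{V}}\transp{\Bmat_{V}}$ from \eqref{eq:Schur_mat_preconditioner}, gives $\transp{\mathbb{D}}\mathbb{D}=\mat{N}_Q^{\half}\mat{S}\mat{N}_Q^{\half}$, whose eigenvalues coincide, by similarity through $\mat{N}_Q^{\half}$, with those of $\mat{N}_Q\mat{S}$, i.e. with the generalized eigenvalues $\sigma^2$ of $\mat{S}\vec{p}=\sigma^2\inv{\mat{N}_Q}\vec{p}$. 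Theorem \ref{thm:spec_eq} asserts precisely that $C_1\transp{\vec{p}}\inv{\mat{N}_Q}\vec{p}\le\transp{\vec{p}}\mat{S}\vec{p}\le C_2\transp{\vec{p}}\inv{\mat{N}_Q}\vec{p}$ for all $\vec{p}$, so every such $\sigma^2$ lies in $[C_1,C_2]$; in particular $C_1\le\sigma_{\text{min}}^2\le\sigma_{\text{max}}^2\le C_2$.

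Finally I would combine the two steps. Monotonicity of $t\mapsto\sqrt{1+4t}$ gives $\tfrac{1}{2}(1+\sqrt{1+4\sigma_{\text{max}}^2})\le\tfrac{1}{2}(1+\sqrt{1+4C_2})$ and $\tfrac{1}{2}(\sqrt{1+4\sigma_{\text{min}}^2}-1)\ge\tfrac{1}{2}(\sqrt{1+4C_1}-1)$. Since $C_1=\min(1,c_1)\le1$, this last quantity is at most $\tfrac{1}{2}(\sqrt{5}-1)<1$, so it is also a lower bound for $\min\set{1,\ \tfrac{1}{2}(\sqrt{1+4\sigma_{\text{min}}^2}-1)}$, i.e. for the smallest eigenvalue modulus. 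Taking the quotient of the two estimates produces the bound stated in the proposition. The only point requiring real care is the bookkeeping of step one — correctly identifying which endpoints of $I^{-}$ and $I^{+}$ carry the largest and smallest modulus, and checking, via $C_1\le1$, that the lower bound on the smallest modulus is not spoiled by the isolated value $1\in I^{+}$; the remaining ingredients are just monotonicity of $\sqrt{1+4t}$ and the spectral equivalence already established in Theorem \ref{thm:spec_eq}.
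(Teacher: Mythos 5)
Your proposal is correct and follows essentially the same route as the paper: identify the squared singular values of $\mathbb{D}$ with the eigenvalues of $\mat{N}_Q^{\half}\mat{S}\mat{N}_Q^{\half}$, locate them in $[C_1,C_2]$ via Theorem \ref{thm:spec_eq}, and feed this into the interval description \eqref{eq:rusten}. You supply details the paper leaves implicit — which endpoints of $I^{-}$ and $I^{+}$ carry the extreme moduli, and the check $C_1=\min(1,c_1)\le 1$ that rules out the isolated value $1\in I^{+}$ interfering with the lower bound — so your write-up is, if anything, more complete than the paper's one-line conclusion.
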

\begin{proof}
Note that the singular values of matrix $\mathbb{D}$ and the eigenvalues of matrix
${\mat{N}_Q}^{\half}\mat{S}{\mat{N}_Q}^{\half}$ are identical. Further, using 
Theorem $\ref{thm:spec_eq}$ with $\vec{p}={\mat{N}_Q}^{\half}\vec{q}$,
$\vec{q}\in \reals^{n_{Q}}$ yields
\[
  C_1\transp{\vec{q}}\vec{q}
  \leq
  \transp{\vec{q}} {\mat{N}_Q}^{\half} \mat{S} {\mat{N}_Q}^{\half} \vec{q}
  \leq      
  C_2 \transp{\vec{q}}\vec{q}
  \quad \text{ for all } q \in \reals^{n_{{Q}}}.
\]
In turn the spectrum of matrices ${\mat{N}_Q}^{\half}\mat{S}{\mat{N}_Q^{\half}}$ 
is contained in the interval $\close{C_1, C_2}$. The statement now follows from 
\eqref{eq:rusten}.
\end{proof}
From numerical experiments we observe that the bound due to Proposition \ref{prep:cond} slightly
overestimates the condition number of the system. For example, using numerical
trace bounds (cf. Figure \ref{fig:trace_bounds}) of geometry (a) in Figure
\ref{fig:scheme}, $c_1=0.204, c_2=0.499$ and Theorem \ref{thm:spec_eq}, the formula yields 
$9.607$ as the upper bound on the condition number. On the other hand 
condition numbers reported in Table \ref{tab:eigs_Qcap} do not exceed $8.637$. 
Similarly using estimated bounds for geometry (b) $c_1=0.237, c_2=0.716$ the formula gives 
upper bound $8.676$. The largest condition number in our experiments (not
reported here) was $7.404$.
% 
% %%%%%%%%%%%%%%%%%%%%%%%%%%%%%%
% % 6 effect of ratio on spectrum
% %%%%%%%%%%%%%%%%%%%%%%%%%%%%%%
\begin{figure}[t!]
\centering
\includegraphics[width=0.55\textwidth]{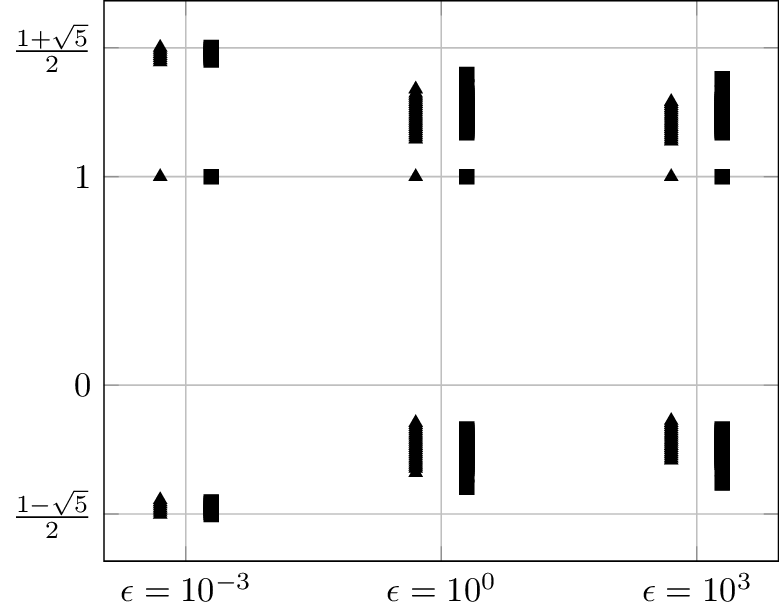}
\caption{Eigenvalues of matrices $\BBQh\AAh$ assembled on
geometries from Figure \ref{fig:scheme} for three different values of
$\epsilon$. The value of $\epsilon$ is indicated by grey vertical lines. On the
left side of the lines is the spectrum for configuration (a). The spectrum for geometry
(b) is then plotted on the right side. For $\epsilon \ll 1$ the eigenvalues cluster 
near $\lambda=1$ and $\lambda=\tfrac{1}{2}\pm\tfrac{1}{2}\sqrt{5}$ (indicated by 
% Note to self. Eigenvalue 1 is associated with far more many dofs then just
% those on the boundary.
grey horizontal lines) which form the spectrum of $\mathbb{B}\AAh$.}
\label{fig:spectra}
\end{figure}

It is clear that \eqref{eq:rusten} could be used to analyze the effect of the
parameter $\epsilon$ on the spectrum provided that the singular values 
$\sigma_{\text{min}}$, $\sigma_{\text{max}}$ were given as functions of 
$\epsilon$. We do not attempt to give this characterization here. Instead the
effect of $\epsilon$ is illustrated by a numerical experiment. Figure
\ref{fig:spectra} considers the spectrum of $\BBQh\AAh$
assembled on geometries from Figure \ref{fig:scheme} and three different values
of the parameter. The systems from the two geometrical configurations are similar 
in size, 4355 for (a)
and 4493 for (b). Note that for $\epsilon \ll 1$ the eigenvalues for both
configurations cluster near $\lambda=1$ and $\lambda=\tfrac{1}{2}\pm\tfrac{1}{2}\sqrt{5}$, 
that is, near the eigenvalues of $\mathbb{B}\AAh$. This observation is expected
in the light of the discussion following Theorem \ref{thm:spec_eq}. With $\epsilon$ 
increasing the difference between $\BBQh$ and $\mathbb{B}$ caused by 
$\Hmat{-\tfrac{1}{2}}$ becomes visible as the eigenvalues 
are no longer clustered. Observe that in these cases the lengths of intervals 
$I^{-}, I^{+}$ are greater for geometry (b). This observation can be
qualitatively understood via Proposition \ref{prep:cond}, Theorem
\ref{thm:spec_eq} and Figure \ref{fig:trace_bounds} where the trace map constants 
$c_1$, $c_2$ of configuration (a) are more spread than those of (b).

% TwoD with inverse based preconditioner
\section{$W$-cap preconditioner}\label{sec:Vcap} % Wcap
To circumvent the need for mappings involving fractional Sobolev spaces we
shall next study a different preconditioner for \eqref{eq:weak_form}. As will be 
seen the new preconditioner $W$-cap preconditioner \eqref{eq:operator_preconditionersV} 
is still robust with respect to the material and discretization parameters. 
%Moreover, the preconditioner will be in general
%cheaper to compute than the $Q$-cap.

Consider operator $\op{A}$ from problem \eqref{eq:op_long} as a mapping 
$W\times Q \rightarrow W^*\times Q^*$, with spaces $W, Q$ defined as
\begin{equation}\label{eq:Vcap_spaces}
  \begin{aligned}
    W&=\left(H^1_0\left(\Omega\right)\cap\epsilon\HG\right)\times
    H^1_0\left(\Gamma\right),\\
    Q&= H^{-1}\left(\Gamma\right).
  \end{aligned}
\end{equation}
The spaces are equipped with norms
\begin{equation}\label{eq:Vcap_norms}
  \norm{w}^2_W = \HnormO{u}^2 + \epsilon^2\HnormG{T_{\Gamma} u}^2 + \HnormG{v}^2
  \quad
  \text{ and }\quad
  \norm{p}^2_Q = \norm{p}^2_{-1, \Gamma}.
\end{equation}
Note that the trace of functions from space $U$ is here controlled in the norm
$\HnormG{\cdot}$ and not the fractional norm $\norm{\cdot}_{\half, \Gamma}$ as was 
the case in \S \ref{sec:Qcap}. Also note that the space $W$ now is
dependent on $\epsilon$ while $Q$ is not. The following result establishes well 
posedness of \eqref{eq:weak_form} with the above spaces.
\begin{theorem}\label{thm:stab_inv}
Let $W$ and $Q$ be the spaces \eqref{eq:Vcap_spaces}. The operator
$\op{A}:W\times Q\rightarrow W^*\times Q^*$,
defined in \eqref{eq:op_long} is an isomorphism and the condition number of 
$\op{A}$ is bounded independently of $\epsilon >0$.
\end{theorem}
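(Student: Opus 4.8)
The plan is to verify the four Brezzi conditions of Theorem~\ref{thm:brezzi} for $\op{A}$ viewed as a map $W\times Q\to W^*\times Q^*$ with the spaces and norms from \eqref{eq:Vcap_spaces}--\eqref{eq:Vcap_norms}, obtaining all constants independent of $\epsilon$. The continuity and coercivity of $A$ on $W$ (conditions \eqref{eq:Brezzi_A_bounded}--\eqref{eq:Brezzi_A_coercivity}) are essentially automatic, but there is a subtlety compared with \S\ref{sec:Qcap}: here $A = \operatorname{diag}(A_U, A_V)$ with $\brack{A_U u,\phi}_\Omega = \inner{\nabla u}{\nabla\phi}_\Omega$ does \emph{not} induce the full $W$-norm, since $\norm{w}_W^2 = \HnormO{u}^2 + \epsilon^2\HnormG{T_\Gamma u}^2 + \HnormG{v}^2$ contains the extra trace term. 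So I would instead check coercivity of $A$ on the kernel $Z = \ker B = \set{(u,v)\in W : \epsilon T_\Gamma u = v}$. On $Z$ one has $v = \epsilon T_\Gamma u$, hence $\HnormG{v}^2 = \epsilon^2\HnormG{T_\Gamma u}^2$, so $\norm{w}_W^2 = \HnormO{u}^2 + 2\epsilon^2\HnormG{T_\Gamma u}^2 \le 2\big(\HnormO{u}^2 + \HnormG{v}^2\big) = 2\,\brack{Aw,w}$, which gives kernel-coercivity with constant $\tfrac12$, independent of $\epsilon$. Continuity of $A$ on all of $W$ is immediate since $\brack{Aw,w} = \HnormO{u}^2 + \HnormG{v}^2 \le \norm{w}_W^2$.

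Next I would check boundedness of $B$: for $w=(u,v)\in W$ and $q\in Q = H^{-1}(\Gamma)$,
\begin{equation*}
  \brack{Bw,q}_\Gamma = \brack{q,\,\epsilon T_\Gamma u - v}_\Gamma
  \le \norm{q}_{-1,\Gamma}\big(\epsilon\HnormG{T_\Gamma u} + \HnormG{v}\big)
  \le \sqrt{2}\,\norm{q}_Q\norm{w}_W,
\end{equation*}
using that both $\epsilon T_\Gamma u$ and $v$ lie in $\HG$ and that the $H^{-1}$--$H^1_0$ duality pairing on $\Gamma$ is bounded by the seminorm. So $B$ is bounded with constant $\sqrt{2}$, uniformly in $\epsilon$.

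The main work, as always, is the inf--sup condition: for every $q\in H^{-1}(\Gamma)$ I must produce $w=(u,v)\in W$ with $\brack{Bw,q}_\Gamma \ge \beta\,\norm{q}_Q\norm{w}_W$, $\beta$ independent of $\epsilon$. The clean choice is to put all the mass on the $v$-component: take $u=0$ and let $v\in\HG$ be the Riesz representative of $q$, i.e. $\inner{\nabla v}{\nabla\psi}_\Gamma = -\brack{q,\psi}_\Gamma$ for all $\psi\in\HG$, so that $\HnormG{v} = \norm{q}_{-1,\Gamma} = \norm{q}_Q$. With $u=0$ one has $w=(0,v)\in W$, $\norm{w}_W = \HnormG{v} = \norm{q}_Q$, and $\brack{Bw,q}_\Gamma = -\brack{q,v}_\Gamma = \inner{\nabla v}{\nabla v}_\Gamma = \HnormG{v}^2 = \norm{q}_Q\norm{w}_W$, giving $\beta = 1$. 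The point worth emphasizing is that this works precisely because $B_V$ is (minus) the identity $\HG\hookrightarrow\HG$ at the level of the chosen norms, so the multiplier space $Q = H^{-1}(\Gamma)$ is exactly dual to the $v$-slot of $W$; the $\epsilon$-dependence sits entirely in the $u$-part of the $W$-norm and never enters the inf--sup argument. Having verified \eqref{eq:Brezzi_A_bounded}, \eqref{eq:Brezzi_A_coercivity} on the kernel, boundedness of $B$, and \eqref{eq:Brezzi_infsup}, Theorem~\ref{thm:brezzi} yields that $\op{A}$ is an isomorphism with $\norm{\op{A}}$ and $\norm{\op{A}^{-1}}$ bounded by functions of the four Brezzi constants, all of which we have shown to be $\epsilon$-independent; hence $\kappa(\op{A})$ is bounded uniformly in $\epsilon>0$. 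The only place requiring a little care is that the $W$-norm genuinely makes $W$ a Hilbert space continuously embedded where needed --- i.e. that $H^1_0(\Omega)\cap\epsilon\HG$ with norm $(\HnormO{u}^2+\epsilon^2\HnormG{T_\Gamma u}^2)^{1/2}$ is complete and that the trace term is well-defined, which follows since $T_\Gamma: H^1_0(\Omega)\to\HGO\subset L^2(\Gamma)$ and we are simply asking in addition that $T_\Gamma u\in\HG$.
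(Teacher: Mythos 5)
Your proposal is correct and follows essentially the same route as the paper: continuity of $A$ with norm $1$, kernel-coercivity with $\alpha=\tfrac12$ via $v=\epsilon T_\Gamma u$ on $\ker B$, boundedness of $B$ with constant $\sqrt{2}$, and the inf--sup condition with $\beta=1$ by restricting to $u=0$ so that only the $v$-slot (where no $\epsilon$ appears) is used. The only cosmetic difference is that you exhibit the explicit Riesz representative realizing the supremum, whereas the paper simply identifies $\sup_{v}\brack{q,v}_\Gamma/\HnormG{v}$ with $\norm{q}_{-1,\Gamma}$.
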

\begin{proof} The proof proceeds by verifying the Brezzi conditions \ref{thm:brezzi}. 
  With $w=(u, v)$, $\omega=(\phi, \psi)$ application of the Cauchy-Schwarz 
inequality yields
\[
  \begin{split}
    \brack{A w, \omega }_{\Omega} &= \inner{\nabla u }{\nabla \phi}_{\Omega} + 
    \inner{\nabla v}{\nabla \psi}_{\Gamma}
    \\
    & \leq \HnormO{u}\HnormO{\phi} + \HnormG{v}\HnormG{\psi}
    \\
    & \leq \HnormO{u}\HnormO{\phi} + \epsilon^2\HnormG{T_{\Gamma}
    u}\HnormG{\phi} + \HnormG{v}\HnormG{\psi}
    \\
    & \leq \norm{w}_W\norm{\omega}_W.
\end{split}
\]
Therefore $A$ is bounded with $\norm{A}=1$ and \eqref{eq:Brezzi_A_bounded} holds. 
The coercivity of $A$ on $\ker B$ for \eqref{eq:Brezzi_A_coercivity} is obtained 
from
\[
\begin{split}
\inf_{w\in \ker B}\frac{\brack{Aw, w}_{\Omega}}{\norm{w}^2_W} &=
\inf_{w\in \ker B}\frac{\HnormO{u}^2 + \HnormG{v}^2}
{
\HnormO{u}^2+\epsilon^2\HnormG{T_\Gamma u}^2+\HnormG{v}^2
}
\\
&=
\inf_{w\in \ker B}\frac{\HnormO{u}^2+\HnormG{v}^2}
{\HnormO{u}^2 + 2\HnormG{v}^2} \geq \frac{1}{2},
\end{split}
\]
where we used that $\epsilon T_\Gamma u = v$ a.e. on the kernel. Consequently 
$\alpha=\tfrac{1}{2}$. Boundedness of $B$ in \eqref{eq:Brezzi_B_bounded} with a 
constant $\norm{B}=\sqrt{2}$ follows from the Cauchy-Schwarz inequality
\[
\begin{split}
\brack{B w, q}_{\Gamma} &\leq \norm{q}_{-1, \Gamma}\epsilon\HnormG{T_\Gamma
u}+\norm{q}_{-1, \Gamma}\HnormG{v}
       \\
       & \leq
  \sqrt{2}\norm{q}_Q\sqrt{\epsilon^2\HnormG{T_\Gamma u}^2 + \HnormG{v}^2}
       \\
       & \leq
\sqrt{2}\norm{q}_Q\sqrt{\HnormO{u}^2 + \epsilon^2\HnormG{T_{\Gamma} u}^2 + \HnormG{v}^2}
     \\
       & \leq \sqrt{2}\norm{q}_Q\norm{w}_W.
\end{split}
\]
To show that the inf-sup condition holds compute
\[
\begin{split}
\sup_{w\in W}\,\frac{\brack{B w, q}_{\Gamma}}{\norm{w}_W} &= 
\sup_{w\in W}\,\frac{\brack{q, \epsilon T_\Gamma u - v}_{\Gamma}}{\sqrt{
  \HnormO{u}^2 + \epsilon^2 \HnormG{T_\Gamma u}^2 + \HnormG{v}^2}
}
\\
&\stackrel{u=0}\geq
\sup_{v\in V} \frac{\brack{q, v}_{\Gamma}}{\HnormG{v}}=\norm{q}_Q.
\end{split}
\]
Thus $\beta=1$ in condition \eqref{eq:Brezzi_infsup}.
\qquad\end{proof}

Following Theorem \ref{thm:stab_inv} the operator $\mathcal{A}$ is 
a symmetric isomorphism between spaces $W\times Q$ and $W^{*}\times Q^{*}$. As a
preconditioner we shall consider a symmetric positive-definite isomorphism 
$W^{*}\times Q^{*}\rightarrow W\times Q$ %
\begin{equation}\label{eq:Wcap}
\BBW=
\begin{bmatrix}
\inv{\open{-\Delta_{\Omega} + T_{\Gamma}^{*}\open{-\epsilon^2 \Delta_{\Gamma}} T_{\Gamma}}}  &  &\\
			  &  \inv{\open{-\Delta_{\Gamma}}} &  \\
        & & {-\Delta_{\Gamma}}\\
\end{bmatrix}.
\end{equation}
%
% Here we have used $-\Delta_{D}$ to differentiate between the Laplace operators
% on $\Omega$ and $\Gamma$. 
% Let us point out that the operator $\BBW$ differs from
% the from the template $W$-cap preconditioner \eqref{eq:operator_preconditionersV}.

%
\subsection{Discrete preconditioner}\label{eq:Vcap_discr} 
% Why
Similar to \S \ref{sec:Qcap_discrete} we shall construct discretizations $W_h \times Q_h$ 
of space $W\times Q$ \eqref{eq:Vcap_spaces} such that the finite dimensional 
operator $\mathcal{A}_h$ defined by considering $\mathcal{A}$ from \eqref{eq:op_long} 
on the constructed spaces satisfies the Brezzi conditions \ref{thm:brezzi}.

% How coercivity
Let $W_h \subset W$ and $Q_h \subset Q$ the spaces \eqref{eq:hspaces} of
continuous piecewise linear polynomials. Then $A_h$, $B_h$ are continuous with
respect to norms \eqref{eq:Vcap_norms} and it remains to verify conditions 
\eqref{eq:Brezzi_A_bounded} and \eqref{eq:Brezzi_infsup}. First, coercivity of
$A_h$ is considered.
% used by Brezzi
\begin{lemma}\label{Vcap_coerc}
Let $W_h, Q_h$ the spaces \eqref{eq:hspaces} and $A_h, B_h$ such that\\ 
$\brack{A w, \omega_h}_{\Omega}=\brack{A_h w_h, \omega_h}_{\Omega}$,
$\brack{B w, q_h}_{\Gamma}=\brack{B_h w_h, q_h}_{\Gamma}$, for $\omega_h, w_h\in
W_h$, $w\in W$ and $q_h\in Q_h$. Then there exists a constant 
$\alpha > 0$ such that for all $z_h \in \ker B_h$
\[
\brack{A_h z_h, z_h} \geq \alpha \norm{z_h}_{W},
\]
where $\norm{\cdot}_{W}$ is defined in \eqref{eq:Vcap_norms}.
\end{lemma}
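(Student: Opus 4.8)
The plan is to follow the coercivity computation from the proof of Theorem~\ref{thm:stab_inv} essentially verbatim; the only new thing that needs checking is that the discrete kernel is as well-behaved as the continuous one, i.e. that for $z_h=(u_h,v_h)\in\ker B_h$ one has the \emph{exact} identity $\epsilon T_\Gamma u_h = v_h$ on $\Gamma$, not merely a weak form of the constraint. Once this is available, the rest is the same three-line estimate as before.

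This exactness is a consequence of the compatibility of the three spaces in \eqref{eq:hspaces}. Since the triangulation of $\Omega_h$ is aligned with $\Gamma_h$, the trace of a continuous piecewise linear $u_h\in U_h\cap H^1_0(\Omega)$ is continuous and piecewise linear on $\Gamma_h$ and vanishes on $\partial\Gamma$; in the notation of \S\ref{sec:Qcap_discrete} this is exactly the identification $\overline U_h=V_h\cap H^1_0(\Gamma)=Q_h$ of the trace image, so $T_\Gamma$ maps $U_h$ into $Q_h$. Hence $\epsilon T_\Gamma u_h-v_h\in Q_h$, and testing the defining relation $\brack{q_h,\epsilon T_\Gamma u_h-v_h}_\Gamma=0$ of $\ker B_h$ with $q_h=\epsilon T_\Gamma u_h-v_h$ forces $\epsilon T_\Gamma u_h-v_h=0$ in $Q_h$. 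In particular $\epsilon\HnormG{T_\Gamma u_h}=\HnormG{v_h}$. With this identity the estimate is identical to the continuous case: for any $z_h\in\ker B_h$ with $z_h\neq 0$,
\[
\frac{\brack{A_h z_h,z_h}}{\norm{z_h}_W^2}
=\frac{\HnormO{u_h}^2+\HnormG{v_h}^2}{\HnormO{u_h}^2+\epsilon^2\HnormG{T_\Gamma u_h}^2+\HnormG{v_h}^2}
=\frac{\HnormO{u_h}^2+\HnormG{v_h}^2}{\HnormO{u_h}^2+2\HnormG{v_h}^2}\geq\tfrac12,
\]
so the claim holds with $\alpha=\tfrac12$, independently of $h$ and $\epsilon$ (and trivially also for $z_h=0$).

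I expect the only delicate point to be the inclusion $\epsilon T_\Gamma u_h-v_h\in Q_h$, which is precisely where the matching of the meshes on $\Omega$ and $\Gamma$, of the polynomial degrees, and of the nodal bases is used. For genuinely non-matching discretizations this identity fails, the constraint is satisfied only weakly on $\ker B_h$, and one would instead have to bound $\epsilon^2\HnormG{T_\Gamma u_h}^2$ by $\HnormO{u_h}^2+\HnormG{v_h}^2$ on the kernel by a more careful argument, possibly at the cost of an $h$-dependent constant; under the standing assumption \eqref{eq:hspaces}, however, no such difficulty arises.
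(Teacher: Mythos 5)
Your proof is correct and follows essentially the same route as the paper: both arguments establish that for $z_h=(u_h,v_h)\in\ker B_h$ the residual $\epsilon T_\Gamma u_h - v_h$ lies in $Q_h$ (by mesh alignment) and is $L^2$-orthogonal to $Q_h$, hence vanishes, so that $\ker B_h\subset\ker B$ and the continuous coercivity computation from Theorem~\ref{thm:stab_inv} applies with $\alpha=\tfrac12$. Your write-up is in fact somewhat more explicit than the paper's about why the residual belongs to $Q_h$ and where matching of the discretizations is used.
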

\begin{proof}
The claim follows from coercivity of $A$ over $\ker B$ (cf. Theorem
\ref{thm:stab_inv}) and the property $\ker B_h \subset \ker B$. To see that the
inclusion holds, let $z_h \in \ker B_h$. Since $z_h$ is continuous on $\Gamma$
we have from definition $\brack{z_h, q_h}_{\Gamma}=0$ for all $q_h \in Q_h$ that
$z_h|_\Gamma=0$. But then $\brack{z_h, q}=0$ for all $q\in Q$ and therefore $z_h
\in \ker B$.\qquad
\end{proof}
%

% How infsup
Finally, to show that the discretization $W_h \times Q_h$ is stable we show that 
the inf-sup condition for $B_h$ holds.
% to be used by Brezzi
\begin{lemma}\label{Vcap_infsup}
Let spaces $W_h, Q_h$ and operator $B_h$ from Lemma
\ref{Vcap_coerc}. Then there exists $\beta > 0$ such that
\begin{equation}\label{eq:discrete_inf_sup0}
    \inf_{q_h\in Q_h}\,\sup_{w_h\in W_h}\, 
    \frac{\brack{B_h w_h, q_h}_{\Gamma}}{\norm{w_h}_W\norm{q_h}_Q} \geq \beta,
\end{equation}
where $\norm{\cdot}_{Q}$ is defined in \eqref{eq:Vcap_norms}.
\end{lemma}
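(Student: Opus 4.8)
The plan is to follow the template of the proof of Lemma~\ref{Qcap_discrete_inf_sup}, but the argument here is considerably shorter: since $Q$ carries only the $H^{-1}(\Gamma)$-norm while the first two terms of $\norm{\cdot}_W$ involve the $\Omega$-component, the whole inf-sup inequality reduces to a one-dimensional statement on $\Gamma$ to which the same $L^2$-projection device applies.

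The first step is to localise the test function. Fix $q_h\in Q_h$ (the case $q_h=0$ being trivial) and restrict the supremum in \eqref{eq:discrete_inf_sup0} to elements $w_h=(0,v_h)$ with $v_h\in V_h\cap H^1_0(\Gamma)=Q_h$. For such $w_h$ the first two contributions to $\norm{w_h}_W$ vanish, so $\norm{w_h}_W=\HnormG{v_h}$, while $\brack{B_h w_h,q_h}_{\Gamma}=-\inner{q_h}{v_h}_{\Gamma}$; since $v_h$ ranges over the whole subspace $Q_h$ the sign is immaterial. Hence it suffices to exhibit a constant $\beta>0$, independent of $h$ and $\epsilon$, with
\[
\sup_{v_h\in Q_h}\frac{\inner{q_h}{v_h}_{\Gamma}}{\HnormG{v_h}}\;\ge\;\beta\,\norm{q_h}_{-1,\Gamma}.
\]

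The second step realises the right-hand side by a projection. Let $v\in H^1_0(\Gamma)$ be the $H^1_0(\Gamma)$-Riesz representative of $q_h$, so that $\inner{q_h}{v}_{\Gamma}=\HnormG{v}^2=\norm{q_h}_{-1,\Gamma}\HnormG{v}$, and let $v_h=\Pi v\in Q_h$ be the $L^2(\Gamma)$-projection of $v$ onto $Q_h$. Because $q_h\in Q_h$ and $\Pi$ is self-adjoint and acts as the identity on $Q_h$, one gets $\inner{q_h}{v_h}_{\Gamma}=\inner{q_h}{v}_{\Gamma}$; and the $H^1_0(\Gamma)$-stability of the $L^2$-projection on shape-regular meshes yields $\HnormG{v_h}\le C_\Pi\,\HnormG{v}$. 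Dividing then gives the displayed bound with $\beta=C_\Pi^{-1}$, and the lemma follows.

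I expect the only genuinely delicate ingredient to be this $H^1_0(\Gamma)$-stability of the $L^2$-projection onto $Q_h$ — exactly the property already invoked in the proof of Lemma~\ref{Qcap_discrete_inf_sup}, see \cite[ch.~7]{braess}, which is available for the spaces \eqref{eq:hspaces} on the triangulations of Figure~\ref{fig:scheme}; everything else is bookkeeping with the definitions. Note that, unlike in the continuous Theorem~\ref{thm:stab_inv}, one cannot simply use the exact Riesz representative of $q_h$ as the test function, since it need not lie in $Q_h$; the content of the lemma is precisely that projecting it back into $Q_h$ costs only a fixed, $h$- and $\epsilon$-independent factor.
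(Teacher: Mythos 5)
Your proposal is correct and follows essentially the same route as the paper: restrict the supremum to $w_h=(0,v_h)$ so that only the $\Gamma$-component survives, then control $\norm{q_h}_{-1,\Gamma}$ via the $H^1_0(\Gamma)$-stable $L^2$-projection $\Pi$ onto $Q_h$ together with the orthogonality $\brack{p-\Pi p,q_h}_\Gamma=0$. The only cosmetic difference is that you evaluate the dual norm at the Riesz representative of $q_h$ while the paper keeps the supremum over all $p\in H^1_0(\Gamma)$ and projects each test function; the two are interchangeable.
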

\begin{proof}
We first proceed as in the proof of Theorem \ref{thm:stab_inv} and compute
\begin{equation}\label{eq:estim0}
\sup_{w_h \in W_h}
\frac{\brack{q_h, \epsilon T_{\Gamma} u_h - v_h}_{\Gamma}}
{\norm{w_h}_{W}}
\stackrel{u_h=0}{\geq}
\sup_{v_h \in V_h}
\frac{\brack{v_h, q_h}_{\Gamma}}
{\semi{v_h}_{1, \Gamma}}.
\end{equation}
Next, for each $p\in H^1_{0}\open{\Gamma}$ let $v_h=\Pi p$ the element of $V_h$
defined in the proof of Lemma \ref{Qcap_discrete_inf_sup}. In particular, it holds
that
\[
\brack{{p - v_h}, q_h}_{\Gamma} = 0,\quad q_h\in Q_h
\]
and $\semi{v_h}_{1, \Gamma}\leq C \semi{p}_{1, \Gamma}$ for some constant $C$
depending only on $\Omega$ and $\Gamma$. Then
\[
\norm{q_h}_{-1, \Gamma} =
\sup_{p\in H^1_0\open{\Gamma}}
\frac{\brack{q_h, p}_{\Gamma}}
{\semi{p}_{1, \Gamma}}
\leq C
\sup_{v_h\in V_h}
\frac{\brack{q_h, v_h}_{\Gamma}}
{\semi{v_h}_{1, \Gamma}}.
\]
The estimate together with \eqref{eq:estim0} proves the claim of the lemma.
\qquad
\end{proof}

Let now $\mat{A}_{U}, \mat{A}_{V}$ and $\mat{B}_{U}, \mat{B}_{V}$ the matrices 
defined in \eqref{eq:mat_defs} as representations of the corresponding finite
dimensional operators in the basis of the stable spaces $W_h$ and $Q_h$. 
We shall represent the preconditioner $\BBW$ by a matrix
\begin{equation}\label{eq:Vcap_precond}
\BBWh =
\begin{bmatrix}
\inv{\open{\Amat_{U} + \epsilon^{2}\transp{\Tmat}\Amat\Tmat}} & & \\
& \inv{\open{\Amat_{V}}} & \\
& & \inv{\Hmat{-1}}\\
\end{bmatrix},
\end{equation}
where $\inv{\Hmat{-1}}=\inv{\mat{M}}\mat{A}\inv{\mat{M}}$, cf.
\eqref{eq:H_def}, and $\mat{M}$, $\mat{A}$ the matrices inducing $L^2$ and $H^1_0$ inner
products on $Q_h$. Let us point out that there is an obvious correspondence between 
the matrix preconditioner $\BBWh$ and the operator $\BBW$ defined in
\eqref{eq:operator_preconditionersV}. On the other hand it is not entirely
straight forward that the matrix $\BBWh$ represents the $W$-cap preconditioner defined here in 
\eqref{eq:Wcap}. In particular, since the isomorphism from $Q^{*}=H^1_0(\Gamma)$
to $Q=H^{-1}(\Gamma)$ is realized by the Laplacian a case could be made for using the 
stiffness matrix $\mat{A}$ as a suitable representation of the operator.

%-----HERE
Let us first argue for $\mat{A}$ not being a suitable representation for
preconditioning. Note that the role of matrix $\mathbb{A}\in\reals^{m\times n}$ in a linear
system $\mathbb{A}\vec{x}=\vec{b}$ is to transform vectors from the solution
space $\reals^n$ to the residual space $\reals^m$. In case the matrix is invertible 
the spaces concide. However, to emphasize the
conceptual difference between the spaces, let us write
$\mathbb{A}:\reals^{n}\rightarrow\reals^{n*}$. Then a preconditioner matrix is a mapping
$\mathbb{B}:\reals^{n*}\rightarrow\reals^{n}$. The stiffness matrix
$\mat{A}$, however, is such that $\mat{A}:\reals^{n_Q}\rightarrow\reals^{n_Q*}$.

% --- THERE
It remains to show that $\inv{\mat{M}}\mat{A}\inv{\mat{M}}$ is the correct representation
of $A=-\Delta_{\Gamma}$. Recall that $Q_h\subset Q^*$ and $\mat{A}$ is the matrix
representation of operator $A_h: Q_h\rightarrow Q^{*}_h$. Further, mappings
$\pi_h: Q_h\rightarrow \reals^{n_Q}$, $\mu_h: Q^{*}_h\rightarrow \reals^{n_Q*}$
\[
p_h = \sum_j(\pi_h p_h)_j \chi_j, \quad p_h \in Q_h
\quad\mbox{ and }\quad
(\mu_h f_h)_j = \brack{f_j, \chi_j}, \quad f_h\in Q^{*}_h
\]
define isomorphisms between\footnotemark spaces $Q_h$, $\reals^{n_Q}$ and $Q^*_h$, $\reals^{n_Q*}$ 
\footnotetext{Note that in \S \ref{sec:intro} the mapping $\mu_h$ was considered
as $\mu_h:Q^{*}_h\rightarrow \reals^{n_Q}$. The definition used here reflects
the conceptual distinction between spaces $\reals^{n_Q}$ and $\reals^{n_Q*}$.
That is, $\mu_h$ is viewed as a map from the space of right-hand sides of the
operator equation $A_h p_h = L_h$ to the space of right-hand sides of the
corresponding matrix equation $\mat{A}\vec{p}=\vec{b}$.
}
respectively. We can uniquely associate each $p_h\in Q_h$ with a functional in 
$Q^{*}_h$ via the Riesz map $I_h:Q_h\rightarrow Q^{*}_h$ defined as 
$\brack{I_h p_h, q_h}_{\Gamma}=\inner{p_h}{q_h}_{\Gamma}$. Since
\[
\open{\mu_h I_h p_h}_j = 
\open{I_h p_h, \chi_j}_{\Gamma} = 
\sum_i \open{\pi_h p_h}_i \inner{\chi_i}{\chi_j}_{\Gamma}
\]
the operator $I_h$ is represented as the mass matrix $\mat{M}$. The matrix then
provides a natural isomorphism from $\reals^{n_Q}$ to $\reals^{n_Q*}$. In turn
$\inv{\mat{M}}\mat{A}\inv{\mat{M}}:\reals^{n_Q*}\rightarrow\reals^{n_Q}$ has the 
desired mapping properties. In conclusion, the inverse of the mass matrix
was used in \eqref{eq:Vcap_precond} as a natural adapter to obtain a matrix operating 
between spaces suitable for preconditioning.

Finally, we make a few observations about the matrix preconditioner $\BBWh$. 
%First, observe that $\inv{\Mmat_{V}}\Amat_{V}\inv{\Mmat_{V}}=\inv{\Hmat{-1}}$
%and the preconditioner can thus be equivalently defined in terms of the mapping 
%\eqref{eq:H_def}. 
Recall that the $Q$-cap preconditioner $\BBQh$ could be related to the Schur 
complement based preconditioner \eqref{eq:Schur_mat_preconditioner} obtained 
by factorizing $\AAh$ in \eqref{eq:eigen_matrix}. The relation of 
$\AAh$ to the $W$-cap preconditioner matrix \eqref{eq:Vcap_precond} is 
revealed in the following calculation
\begin{equation}\label{eq:aux}
  \mathbb{U} \mathbb{L} \AAh = 
\begin{bmatrix}
\Amat_{V} + \epsilon^{2}\transp{\Tmat}\Amat_{}\Tmat & & \\
                                   & \tau^{2}\Amat_{} & -\Mmat_{}\\
-\epsilon\Mmat_{}\Tmat&  & \Mmat_{}\inv{\Amat_{}}\Mmat_{}\\
\end{bmatrix},
\end{equation}
where
\[
\mathbb{U} = 
\begin{bmatrix}
  \mat{I} & \phantom{\transp{\Tmat}\Amat_{}\inv{\Mmat_{}}} &
  -\transp{\Tmat}\epsilon\Amat_{}\inv{\Mmat_{}}\\
        & \mat{I} & \\
        &         & \mat{I}\\
\end{bmatrix}\,
\quad\text{ and }\quad
\mathbb{L} =
\begin{bmatrix}
\mat{I} & &\\
        & \mat{I} & \\
        & -\Mmat_{}\inv{\mat{A}_{}} & -\mat{I}\\
\end{bmatrix}.
\]
Here the matrix $\mathbb{L}$ introduces a Schur complement of a submatrix of
$\AAh$ corresponding to spaces $V_h, Q_h$.%, that is $\AAh\left[1\!:, 1\!:\right]$. 
The matrix $\mathbb{U}$ then eliminates the 
constraint on the space $U_h$. Preconditioner $\BBWh$ could now 
be interpreted as coming from the diagonal of the resulting matrix in
\eqref{eq:aux}. Futher, note that the action of the $Q_h$-block can be computed 
cheaply by Jacobi iterations with a diagonally preconditioned mass matrix 
(cf. \cite{wathen_M}).

%%%%%%%%%%%%%%%%%%%%%%%%%%%%%%
% 1 eigenvalues
%%%%%%%%%%%%%%%%%%%%%%%%%%%%%%
\begin{table}[ht!]
  \begin{center}
  \caption{Spectral condition numbers of matrices ${\BBWh\AAh}$ 
  for the system assembled on geometry (a) in Figure \ref{fig:scheme}.
  }
  \footnotesize{
\begin{tabular}{l|ccccccc}
%\hline
%  \multicolumn{1}{l|}{} & \multicolumn{7}{c}{$\epsilon$}\\
%\hline
% $n$ & $10^{-3}$ & $10^{-2}$ & $10^{-1}$ & $10^{0}$ & $10^{1}$ & $10^{2}$ & $10^{3}$\\
%\hline
\hline
  \multirow{2}{*}{size} & \multicolumn{7}{c}{$\log_{10}\epsilon$}\\
  \cline{2-8}
  & $-3$ & $-2$ & $-1$ & $0$ & $1$ & $2$ & $3$\\
\hline
99    & 2.619 & 2.627 & 2.546 & 3.615 & 3.998 & 4.044 & 4.048\\
323   & 2.623 & 2.653 & 2.780 & 3.813 & 4.023 & 4.046 & 4.049\\
1155  & 2.631 & 2.692 & 3.194 & 3.925 & 4.036 & 4.048 & 4.049\\
4355  & 2.644 & 2.740 & 3.533 & 3.986 & 4.042 & 4.048 & 4.049\\
16899 & 2.668 & 2.788 & 3.761 & 4.017 & 4.046 & 4.049 & 4.049\\
66563 & 2.703 & 3.066 & 3.896 & 4.033 & 4.047 & 4.049 & 4.049\\
\hline
\end{tabular}
\label{tab:eigs_Vcap}
}
\end{center}
\end{table}
%%%%%%%%%%%%%%%%%%%%%%%%%%%%%%%

\subsection{Numerical experiments}
Parameter robust properties of the $W$-cap preconditioner are demonstrated by 
the two numerical experiments used to validate the $Q$-cap preconditioner in 
\S \ref{sec:Qcap_numerics}. Both the experiments use discretization of
domain (a) from Figure \ref{fig:scheme}. First, using the \textit{exact} preconditioner 
we consider the spectral condition numbers of matrices ${\BBWh \AAh}$. Next, using 
an approximation of $\BBWh$ the linear system $\overbar{\mathbb{B}}_W \AAh \vec{x}=\overbar{\mathbb{B}}_W\vec{f}$ 
is solved with the minimal residual method. The operator $\overbar{\mathbb{B}}_W$ is 
defined as
\begin{equation}\label{eq:B2h_approx}
\overbar{\mathbb{B}}_W = 
\begin{bmatrix}
\text{AMG}{\open{\Amat_U + \epsilon^{2}\transp{\Tmat}\Amat\Tmat}} & & \\
 & \text{LU}{\open{\Amat}} & \\
 & & \text{LU}\open{\Mmat}\,\Amat\,\text{LU}\open{\Mmat}\\
\end{bmatrix}.
\end{equation}
%
% Note that for greater computational efficiency the above definition can be
% improved by taking advantage of the symmetry and positive definiteness of 
% the second and the third block e.g. by obtaining their inverses 
% by Cholesky decomposition. For large $\dim Q_h$ the inverse of the mass matrix
% is efficiently obtained by the diagonally preconditioned Jacobi iterations. In 
% our experiments these optimizations present negligible gains since $m\ll n_1$ 
% and the inverse of the first block by multigrid dominates the 
% computational costs.

The spectral condition numbers of matrices ${\BBWh \AAh}$ for different values of 
material parameter $\epsilon$ are listed in Table \ref{tab:eigs_Vcap}. For all the 
considered discretizations the condition numbers are bounded with respect to $\epsilon$. 
We note that the mesh convergence of the condition numbers appears to be faster
and the obtained values are in general smaller than in case of the $Q$-cap 
preconditioner (cf. Table \ref{tab:eigs_Qcap}).

Table \ref{tab:Vcap_iters} reports the number of iterations required for 
convergence of the minimal residual method for the linear system
$\overbar{\mathbb{B}}_W \AAh \vec{x}=\overbar{\mathbb{B}}_W\vec{f}$. Like for the $Q$-cap preconditioner 
the method is started from a random initial vector and the condition
$\transp{\vec{r}_k}\overbar{\mathbb{B}}_W \vec{r}_k < 10^{-12}$ is used as 
a stopping criterion. We find that the iteration counts with the $W$-cap
preconditioner are again bounded for all the values of the parameter $\epsilon$.
Consistent with the observations about the spectral condition number, the
iteration count is in general smaller than for the system preconditioned with
the $Q$-cap preconditioner.

%%%%%%%%%%%%%%%%%%%%%%%%%%%%%%
% 2 iterations
%%%%%%%%%%%%%%%%%%%%%%%%%%%%%%
\begin{table}[ht!]
  \begin{center}
  \caption{Iteration count for system
 $\overbar{\mathbb{B}}_W\AAh \vec{x}=\overbar{\mathbb{B}}_W\vec{f}$ 
 solved with the minimal residual method. The problem is assembled on geometry
 (a) from Figure \ref{fig:scheme}. Comparison to the number of iterations with the 
$Q$-cap preconditioned system is shown in the brackets (cf. also Table \ref{tab:Qcap_iters}).
 }
 \footnotesize{
\begin{tabular}{l|ccccccc}
%\hline
%  & \multicolumn{7}{c}{$\epsilon$}\\
%\hline
%$n$ & $10^{-3}$ & $10^{-2}$ & $10^{-1}$ & $10^{0}$ & $10^{1}$ & $10^{2}$ & $10^{3}$\\
%\hline
\hline
  \multirow{2}{*}{size} & \multicolumn{7}{c}{$\log_{10}\epsilon$}\\
  \cline{2-8}
  & $-3$ & $-2$ & $-1$ & $0$ & $1$ & $2$ & $3$\\
\hline
66563    & 17(-3) & 33(-1)& 40(3) & 30(-2) & 20(-8) & 14(-10) & 12(-9)\\
264195   & 19(-3) & 35(1) & 39(5) & 28(-2) & 19(-7) & 14(-10) & 11(-9)\\
1052675  & 22(-2) & 34(1) & 37(5) & 27(-1) & 19(-7) & 14(-8)  & 11(-7)\\
4202499  & 24(-2) & 34(2) & 34(4) & 25(-1) & 17(-7) & 12(-8)  & 9(-8)\\
8398403  & 25(-1) & 32(2) & 32(2) & 24(-2) & 16(-6) & 11(-8)  & 8(-7)\\
11075583 & 25(-1) & 32(2) & 32(2) & 25(-1) & 16(-6) & 13(-6)  & 11(-4)\\
\hline
\end{tabular}
\label{tab:Vcap_iters}
}
\end{center}
\end{table}
% %%%%%%%%%%%%%%%%%%%%%%%%%%%%%%

We note that the observations from \S \ref{sec:Qcap_numerics} about the difference 
in $\epsilon$-dependence of condition numbers and iteration counts of the 
$Q$-cap preconditioner apply to the $W$-cap preconditioner as well.

Before addressing the question of computational costs of the proposed
preconditioners let us remark that the $Q$-cap preconditioner and the $W$-cap 
preconditioners are not spectrally equivalent. Further, both preconditioners yield
numerical solutions with linearly(optimaly) converging error, see Appendix 
\ref{sec:appendix_eoc}.
% THERE----

% \begin{table}[ht!]
%   \begin{center}
%   \caption{ }
%   \footnotesize{
%   \begin{tabular}{c|c}
%   \hline
% $n$ & $\norm{u-u_h}_{1, \Omega}$ \\
%   \hline
% 16770 & $3.38 \times 10^{-2}$(1.00)\\
% 66306 & $1.69 \times 10^{-2}$(1.00)\\
% 263682 & $8.45 \times 10^{-3}$(1.00)\\
% 1051650 & $4.23 \times 10^{-3}$(1.00)\\
%   \hline
%   \end{tabular}
%   }
%   \label{tab:Babuska_error}
%   \end{center}
% \end{table}

%
\section{Computational costs}\label{sec:cpu_cost} 
We conclude by assessing computational efficiency of the proposed preconditioners. 
In particular, the setup cost and its relation to the aggregate solution time of
the Krylov method is of interest. For simplicity we let $\epsilon=1$.

% What makes up timing
In case of the $Q$-cap preconditioner discretized as \eqref{eq:Qcap_precond_approx}
the setup cost is determined by the construction of algebraic multigrid (AMG) and 
the solution of the generalized eigenvalue problem $\mat{A}\mat{x}=\lambda \mat{M}\mat{x}$ (GEVP). 
The problem is here solved by calling OpenBLAS\cite{openblas} implementation of 
LAPACK\cite{lapack} routine DSYGVD. The setup cost of the $W$-cap preconditioner is 
dominated by the construction of 
multigrid for operator $\Amat_U + \transp{\Tmat}\Amat\Tmat$. We found that
the operator can be assembled with negligible costs and therefore do not report
timings of this operation.

% Comment on timing + conclusions
The setup costs of the preconditioners obtained on a Linux machine with 16GB RAM
and Intel Core i5-2500 CPU clocking at 3.3 GHz are reported in Table 
\ref{tab:timings_unif}. We remark that timings on the finest discretization deviate 
from the trend set by the predecessors. This is due to SWAP memory being required 
to complete the operations and the case should therefore be omitted from the 
discussion. On the remaining discretizations the following observations can be 
made: (i) the solution time always dominates the construction time by a factor 
5.5 for $W$-cap and 3.5 for $Q$-cap, (ii) $W$-cap preconditioner is close to two 
times cheaper to construct than the $Q$-cap preconditioner in the form 
\eqref{eq:Qcap_precond_approx}, (iii) the eigenvalue problem always takes fewer 
seconds to solve than the construction of multigrid.

% lumping 
For our problems of about 11 million nodes in the $2d$ domain, the strategy of 
solving the generalized eigenvalue problem using a standard LAPACK routine 
provided an adequate solution. However, the DSYGVD routine appears to be nearly cubic in 
complexity ($\mathcal{O}(n^{2.70}_Q)$ or $\mathcal{O}(n^{1.35}_U)$, cf. Table
\ref{tab:timings_unif}), which may represent a bottleneck for larger problems. 
However, the transformation $\mat{M}_l^{\nhalf}\mat{A} \mat{M}_l^{\nhalf}$ with 
$\mat{M}_l$ the lumped mass matrix presents a simple trick providing significant
speed-up. In fact, the resulting eigenvalue problem is symmetric
and tridiagonal
and can be solved with fast algorithms of nearly quadratic complexity
\cite{demmel, mrrr}. Note that the tridiagonal property holds under the 
assumption of $\Gamma$ having no bifurcations and that the elements are linear.
To illustrate the potential gains with mass lumping, 
using the transformation and applying the dedicated LAPACK routine DSTEGR we were 
able to compute eigenpairs for systems of order sixteen thousand in about fifty 
seconds. This presents more than a factor ten speed up relative to the original 
generalized eigenvalue problem. The value should also be viewed in the light of 
the fact that the relevant space $U_h$ has in this case about quarter billion 
degrees of freedom. We remark that \cite{gevp_s3d} presents a method for 
computing all the eigenpairs of the generalized symmetric tridiagonal eigenvalue 
problem with an estimated quadratic complexity.

% Other methods
Let us briefly mention a few alternative methods for realizing the mapping between
fractional Sobolev spaces needed by the $Q$-cap preconditioner. The methods have
a common feature of computing the action of operators rather than constructing
the operators themselves. Taking advantage of the fact that $\Hmat{s}=\mat{M}\mat{S}^{-s}$,
$\mat{S}=\inv{\mat{A}}\mat{M}$, the action of the powers of the matrix $\mat{S}$ is 
efficiently computable by contour integrals \cite{contour}, symmetric Lanczos 
process \cite{arioli_ima, arioli_siam} or, in case the matrices $\mat{A}$, $\mat{M}$ 
are structured, by fast Fourier transform \cite{fft}. Alternatively, the mapping 
can be realized by the BPX preconditioner \cite{bramble_BPX, bramble_scales} or
integral operator based preconditioners, e.g. \cite{steinbach_bdry}. The above
mentioned techniques are all less than $\mathcal{O}(n_Q^2)$ in complexity.

% Some final words
In summary, for linear elements and geometrical configurations 
where $\Gamma$ is free of bifurcations the eigenvalue problem required for \eqref{eq:H_def} 
lends itself to solution methods with complexity nearing that of the multigrid construction. 
In such case the $Q$-cap preconditioner \eqref{eq:Qcap_precond_approx} is
feasible whenever the methods deliver acceptable performance 
($n_Q\sim 10^4$). For larger spaces $Q_h$ a practical realization
of the $Q$-cap preconditioner could be achieved by one of the listed
alternatives.

\begin{table}[h]
\begin{center}
  \caption{Timings of elements of construction of the $Q$, $W$-cap for
  $\epsilon=1$ and discretizations from Table \ref{tab:Qcap_iters}, 
  \ref{tab:Vcap_iters}. Estimated complexity of computing quantity $v$ at $i$-th row,
  $r_i=\sfrac{\log{v_i}-\log{v_{i-1}}}{\log{m_i}-\log{m_{i-1}}}$ is shown in the
  brackets. Fitted complexity of computing $v$, $\mathcal{O}({n^r_Q})$ is obtained by 
  least-squares. All fits but GEVP ignore the SWAP effected final discretization. 
}
\label{tab:timings_unif}
\footnotesize{
\begin{tabular}{ll|lll|ll}
\hline
\multirow{2}{*}{$n_U$}  & \multirow{2}{*}{$n_Q$}  & \multicolumn{3}{c|}{$Q$-cap} & \multicolumn{2}{c}{$W$-cap}\\
  \cline{3-7}
  & & AMG$\left[s\right]$ & GEVP$\left[s\right]$ & MinRes$\left[s\right]$ &
      AMG$\left[s\right]$ & MinRes$\left[s\right]$ \\
\hline
   66049 & 257  & 0.075(1.98)  & 0.014(1.81)  &  0.579(1.69)  & 0.078(1.94)  & 0.514(1.73)  \\
  263169 & 513  & 0.299(2.01)  & 0.066(2.27)  &  2.286(1.99)  & 0.309(1.99)  & 2.019(1.98)  \\
 1050625 & 1025 & 1.201(2.01)  & 0.477(2.87)  &  8.032(1.82)  & 1.228(1.99)  & 7.909(1.97)  \\
 4198401 & 2049 & 4.983(2.05)  & 3.311(2.80)  &  30.81(1.94)  & 4.930(2.01)  & 30.31(1.94) \\
 8392609 & 2897 & 9.686(1.92)  & 8.384(2.68)  &  62.67(2.05)  & 10.64(2.22)  & 59.13(1.93) \\
11068929 & 3327 & 15.94(3.60)  & 12.25(2.74)  &  84.43(2.15)  & 15.65(2.79)  & 82.13(2.37) \\
\hline
% Qamg 2.02 Qeig 2.70 Vamg 2.02 Vadd 1.98 3761 8886110
  \multicolumn{2}{l|}{Fitted complexity}
  & \multicolumn{1}{c}{(2.02)} & \multicolumn{1}{c}{(2.70)} & \multicolumn{1}{c|}{(1.92)} &
     \multicolumn{1}{c}{(2.02)} & \multicolumn{1}{c}{(1.96)} \\
\hline
\end{tabular}
}
\end{center}
\end{table}

% The rest
\section{Conclusions}
We have studied preconditioning of model multiphysics problem \eqref{pde} with
$\Gamma$ being the subdomain of $\Omega$ having codimension one. Using operator
preconditioning \cite{kent_ragnar} two robust preconditioners were proposed and 
analyzed. Theoretical findings obtained in the present treatise about robustness of 
preconditioners with respect to material and discretization parameter were demonstrated 
by numerical experiments using a stable finite element approximation for the related 
saddle point problem developed herein. 
%%%%
Computational efficiency of the preconditioners was assessed revealing that the $W$-cap 
preconditioner is more practical. The $Q$-cap preconditioner with discretization
based on eigenvalue factorization is efficient for smaller problems and its
application to large scale computing possibly requires different means of realizing 
the mapping between the fractional Sobolev spaces.

Possible future work based on the presented ideas includes extending the
preconditioners to problems coupling $3d$ and $1d$ domains, problems with 
multiple disjoint subdomains and problems describing different physics on the
coupled domains. In addition, a finite element discretization of the
problem, which avoids the constraint for $\Gamma_h$ to be aligned with facets of
$\Omega_h$ is of general interest.

\section*{Acknowledgments}
We would like to thank the anonymous referees for their valuable and
constructive comments, which improved the presention of this paper.

% %
\appendix
\section{Brezzi theory}\label{sec:appendix_brezzi}

\begin{theorem}[Brezzi]\label{thm:brezzi}
The operator $\mathcal{A}:V\times Q \rightarrow V^*\times Q^*$ in
\eqref{eq:op_short} is an isomorphism if the following conditions are satisfied
\begin{subequations}\label{eq:Brezzi_conditions}
\begin{enumerate}[(a)]
  \item $A$ is bounded, 
    \begin{equation}
      \sup_{u\in V}\,\sup_{v\in V} \frac{\brack{A u,
      v}}{\norm{u}_V\norm{v}_V}=c_A \equiv \norm{A} < \infty,
\label{eq:Brezzi_A_bounded}
    \end{equation}
  \item $A$ is invertible on $\ker B$, with 
    \begin{equation}\label{eq:Brezzi_A_coercivity}
      \inf_{u\in \ker B} \frac{\brack{A u, u}}{\norm{u}^2_V} \geq \alpha > 0
    \end{equation}
  \item $B$ is bounded, 
    \begin{equation}\label{eq:Brezzi_B_bounded}
      \sup_{q\in Q}\,\sup_{v\in V}\, \frac{\brack{Bv, q}}{\norm{v}_V\norm{q}_Q} =
      c_B \equiv \norm{B} < \infty,
    \end{equation}
  \item $B$ is surjective (also inf-sup or LBB condition), with
    \begin{equation}\label{eq:Brezzi_infsup}
      \inf_{q\in Q}\,\sup_{v\in V}\, \frac{\brack{Bv, q}}{\norm{v}_V\norm{q}_Q} \geq
      \beta > 0.
    \end{equation}
\end{enumerate}
\end{subequations}
The operator norms $\norm{\mathcal{A}}$ and $\norm{\mathcal{A}^{-1}}$ are bounded in 
terms of the constants appearing in (a)-(d).
\end{theorem}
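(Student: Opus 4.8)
The plan is to establish this classical result by the standard three-step reduction argument, keeping track of norms so as to also obtain the bounds on $\norm{\mathcal{A}}$ and $\norm{\mathcal{A}^{-1}}$. Boundedness of $\mathcal{A}$ is immediate from \eqref{eq:Brezzi_A_bounded} and \eqref{eq:Brezzi_B_bounded}: the off-diagonal blocks of $\mathcal{A}$ are bounded by $c_B$ and the $(1,1)$ block by $c_A$, so $\norm{\mathcal{A}(u,p)}_{V^*\times Q^*}\le C(c_A,c_B)\,\norm{(u,p)}_{V\times Q}$. The real content is that $\mathcal{A}$ is a bijection onto $V^*\times Q^*$ with inverse bounded in terms of $c_A,\alpha,c_B,\beta$.

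First I would record the functional-analytic consequence of the inf-sup condition \eqref{eq:Brezzi_infsup}: it forces $B\colon V\to Q^*$ to have closed range and therefore to be surjective, and the restriction of $B$ to $(\ker B)^{\perp}$ to be an isomorphism onto $Q^*$ with inverse of norm at most $\beta^{-1}$; dually, $B^*\colon Q\to V^*$ is injective with $\operatorname{range}(B^*)=(\ker B)^{\circ}$ and $\norm{q}_Q\le\beta^{-1}\norm{B^*q}_{V^*}$. Given data $(f,g)\in V^*\times Q^*$, I then pick the unique $u_0\in(\ker B)^{\perp}$ with $Bu_0=g$ and $\norm{u_0}_V\le\beta^{-1}\norm{g}_{Q^*}$, and look for the solution in the form $u=u_0+u_1$ with $u_1\in\ker B$.

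The second step is to test the equation $Au+B^*p=f$ against $v\in\ker B$: the term $\brack{B^*p,v}=\brack{p,Bv}$ vanishes, leaving the problem of finding $u_1\in\ker B$ with $\brack{Au_1,v}=\brack{f,v}-\brack{Au_0,v}$ for all $v\in\ker B$. Since $\ker B$ is a closed subspace of the Hilbert space $V$ and $A$ is bounded \eqref{eq:Brezzi_A_bounded} and coercive on $\ker B$ \eqref{eq:Brezzi_A_coercivity}, the Lax--Milgram lemma gives a unique such $u_1$, with $\norm{u_1}_V\le\alpha^{-1}(\norm{f}_{V^*}+c_A\norm{u_0}_V)$. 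The third step recovers $p$: setting $r:=f-Au\in V^*$ with $u=u_0+u_1$, the choice of $u_1$ makes $r$ annihilate $\ker B$, so $r\in(\ker B)^{\circ}=\operatorname{range}(B^*)$ and there is $p\in Q$ with $B^*p=r$, unique by injectivity of $B^*$. Then $Bu=Bu_0=g$ and $Au+B^*p=f$, so $(u,p)$ solves $\mathcal{A}(u,p)=(f,g)$, and $\norm{p}_Q\le\beta^{-1}\norm{r}_{V^*}\le\beta^{-1}(\norm{f}_{V^*}+c_A\norm{u}_V)$. Combining the three estimates yields $\norm{(u,p)}_{V\times Q}\le C\,\norm{(f,g)}_{V^*\times Q^*}$ with $C$ depending only on $c_A,\alpha,\beta$, which is the bound on $\norm{\mathcal{A}^{-1}}$; applied with $(f,g)=(0,0)$ it also gives uniqueness, so $\mathcal{A}$ is an isomorphism.

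I expect the main obstacle to be the first (functional-analytic) step rather than the construction: one must extract closedness of $\operatorname{range}(B)$ from the inf-sup inequality \eqref{eq:Brezzi_infsup}, identify $(\ker B)^{\circ}$ with $\operatorname{range}(B^*)$ through the closed-range theorem, and be careful with the $\beta^{-1}$ factors so that the final bound on $\norm{\mathcal{A}^{-1}}$ is genuinely expressed in terms of the four Brezzi constants. Everything else is routine Hilbert-space manipulation and bookkeeping with the triangle inequality.
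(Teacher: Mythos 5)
The paper offers no proof of this theorem beyond the citation ``See for example \cite{BrezziFortin}'', and your argument is precisely the classical proof from that reference---splitting $u$ into its $(\ker B)^{\perp}$ and $\ker B$ components, solving $Bu_0=g$ via the inf-sup/closed-range argument, applying Lax--Milgram on $\ker B$ (which needs only the coercivity \eqref{eq:Brezzi_A_coercivity} and boundedness \eqref{eq:Brezzi_A_bounded}, not symmetry), and recovering $p$ from $(\ker B)^{\circ}=\operatorname{range}(B^{*})$---carried out correctly and with the constants tracked so that $\norm{\mathcal{A}^{-1}}$ depends only on $c_A,\alpha,\beta$. The one point worth making explicit is that the homogeneous case yields uniqueness not merely because your a priori bound holds for the \emph{constructed} solution, but because each of the three steps determines its component uniquely (the $(\ker B)^{\perp}$ part of any solution must equal $u_0$, the $\ker B$ part is the unique Lax--Milgram solution, and $B^{*}$ is injective), so every solution coincides with the constructed one.
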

\begin{proof}
See for example \cite{BrezziFortin}. \qquad
\end{proof}

\section{Estimated order of convergence}\label{sec:appendix_eoc}
Refinements of a uniform discretization of geometry (a) in Figure \ref{fig:scheme}
are used to establish order of convergence of numerical solutions of a manufactured problem 
obtained using $Q$-cap and $W$-cap preconditioners. The error of discrete solutions 
$u_h$ and $v_h$ is interpolated by discontinuous piecewise cubic polynomials and 
measured in the $H^1_0$ norm. The observed convergence rate is
linear(optimal).\\
% \begin{table}[h]
%   \caption{Energy norm of the error and the convergence rate of the computed
%   solutions using $Q$-cap and $W$-cap preconditioners.}
\begin{center}
\footnotesize{
  \begin{tabular}{l|c c | c c }
  \hline
    \multirow{2}{*}{size}  & \multicolumn{2}{c|}{$Q$-cap} & \multicolumn{2}{c}{$W$-cap}\\
    \cline{2-5}
    & 
$\semi{u-u_h}_{1, \Omega}$ & $\semi{v-v_h}_{1, \Gamma}$  &
$\semi{u-u_h}_{1, \Omega}$ & $\semi{v-v_h}_{1, \Gamma}$ \\
  \hline
%99      & $6.01\times{10}^{-1}$ & (0.98) & $2.17\times{10}^{-1}$ & (1.11) & $6.01\times{10}^{-1}$ & (0.98) & $2.17\times{10}^{-1}$ & (1.11)\\
%323     & $3.01\times{10}^{-1}$ & (0.99) & $1.06\times{10}^{-1}$ & (1.03) & $3.01\times{10}^{-1}$ & (0.99) & $1.06\times{10}^{-1}$ & (1.03)\\
%1155    & $1.50\times{10}^{-1}$ & (1.00) & $5.27\times{10}^{-2}$ & (1.01) & $1.50\times{10}^{-1}$ & (1.00) & $5.27\times{10}^{-2}$ & (1.01)\\
%4355    & $7.51\times{10}^{-2}$ & (1.00) & $2.63\times{10}^{-2}$ & (1.00) & $7.51\times{10}^{-2}$ & (1.00) & $2.63\times{10}^{-2}$ & (1.00)\\
16899   & $3.76\times{10}^{-2}$(1.00) & $1.32\times{10}^{-2}$(1.00) & $3.76\times{10}^{-2}$(1.00) & $1.32\times{10}^{-2}$(1.00)\\
66563   & $1.88\times{10}^{-2}$(1.00) & $6.58\times{10}^{-3}$(1.00) & $1.88\times{10}^{-2}$(1.00) & $6.58\times{10}^{-3}$(1.00)\\
264195  & $9.39\times{10}^{-3}$(1.00) & $3.29\times{10}^{-3}$(1.00) & $9.39\times{10}^{-3}$(1.00) & $3.29\times{10}^{-3}$(1.00)\\
1052675 & $4.70\times{10}^{-3}$(1.00) & $1.64\times{10}^{-3}$(1.00) & $4.70\times{10}^{-3}$(1.00) & $1.64\times{10}^{-3}$(1.00)\\
4202499 & $2.35\times{10}^{-3}$(1.00) & $8.22\times{10}^{-4}$(1.00) & $2.35\times{10}^{-3}$(1.00) & $8.22\times{10}^{-4}$(1.00)\\
  \hline
  \end{tabular}
}
%  \label{tab:error}
\end{center}
%\end{table}

% %
\bibliographystyle{siam}
\bibliography{paper}
\end{document}